\title{Cylindrical contact homology for dynamically convex contact forms in three dimensions}
\author{Michael Hutchings\footnote{Partially supported by NSF grant DMS-1105820.}\;  and Jo Nelson\footnote{Supported by NSF grant DMS-1303903, the Bell Companies Fellowship, the Charles Simonyi Endowment, the Fund for Mathematics at the Institute for Advanced Study, and the B\&B chez Katrin.}\;}
\date{}
\newcommand{\mc}[1]{{\mathcal #1}}
\numberwithin{equation}{section}
\newtheorem{theorem}{Theorem}[section]
\newtheorem{proposition}[theorem]{Proposition}
\newtheorem{lemma}[theorem]{Lemma}
\newtheorem{lemma-definition}[theorem]{Lemma-Definition}
\newtheorem{conjecture}[theorem]{Conjecture}
\theoremstyle{definition}
\newtheorem{definition}[theorem]{Definition}
\newtheorem{remark}[theorem]{Remark}
\newtheorem{example}[theorem]{Example}
\newcommand{\floor}[1]{\left\lfloor #1 \right\rfloor}
\newcommand{\ceil}[1]{\left\lceil #1 \right\rceil}
\newcommand{\C}{{\mathbb C}}
\newcommand{\Q}{{\mathbb Q}}
\newcommand{\R}{{\mathbb R}}
\newcommand{\Z}{{\mathbb Z}}
\newcommand{\op}{\operatorname}
\newcommand{\M}{\mc{M}}
\newcommand{\Ker}{\op{Ker}}
\newcommand{\tensor}{\otimes}
\newcommand{\CZ}{\op{CZ}}
\newcommand{\bpm}{\begin{pmatrix}}
\newcommand{\epm}{\end{pmatrix}}
\begin{document}

\setcounter{tocdepth}{2}

\maketitle

\begin{abstract}
We show that for dynamically convex contact forms in three dimensions, the cylindrical contact homology differential $\partial$ can be defined by directly counting holomorphic cylinders for a generic almost complex structure, without any abstract perturbation of the Cauchy-Riemann equation. We also prove that $\partial^2=0$. Invariance of cylindrical contact homology in this case can be proved using $S^1$-dependent almost complex structures, similarly to work of Bourgeois-Oancea; this will be explained in another paper.
\end{abstract}

\tableofcontents

\section{Introduction and statement of results}

\subsection{Introduction}

Cylindrical contact homology, introduced by Eliashberg-Givental-Hofer \cite{egh}, is in principle an invariant of contact manifolds $(Y,\xi)$ that admit a contact form $\lambda$ without contractible Reeb orbits of certain gradings. The cylindrical contact homology of $(Y,\xi)$ is defined by choosing a nondegenerate such contact form $\lambda$ and taking the homology of a chain complex over $\Q$ which is generated by ``good'' Reeb orbits, and whose differential $\partial$ counts $J$-holomorphic cylinders in $\R\times Y$ for a suitable almost complex structure $J$. Unfortunately, in many cases there is no way to choose $J$ so as to obtain the transversality for holomorphic cylinders needed to define $\partial$ and to show that $\partial^2=0$ and that the homology is invariant. Thus, to define cylindrical contact homology in general, some kind of ``abstract perturbation'' of the $J$-holomorphic curve equation is needed, for example using polyfolds or Kuranishi structures, and this is still a work in progress. 

Although such abstract methods should be able to define contact homology in general, for computations and applications it is often desirable to have a more explicit geometric definition of the chain complex, in those special situations when this is possible. The goal of this paper is to show that for ``dynamically convex'' contact forms $\lambda$ in three dimensions, and for generic almost complex structures $J$, one can in fact define the differential $\partial$ by counting $J$-holomorphic cylinders without any abstract perturbation. We also show that $\partial^2=0$. (When $\pi_1(Y)$ contains torsion we make one additional assumption, which can be removed if a certain technical conjecture holds.)

Previously, the paper \cite{bce} claimed to show that cylindrical contact homology is well-defined and invariant for dynamically convex contact forms on $S^3$. However the argument had two gaps: First, a certain kind of breaking of index $2$ cylinders that could potentially interfere with the compactness argument in the proof that $\partial^2=0$ was not considered, see Proposition~\ref{prop:1neg}(c) below. Second, $S^1$-dependent almost complex structures were used to guarantee transversality of the moduli spaces of holomorphic cylinders. However, breaking the $S^1$ symmetry invalidates the gluing property needed to prove $\partial^2=0$ and the chain map and chain homotopy equations\footnote{One can correct for this failure of gluing, but one is then naturally led to a ``Morse-Bott'' version of the chain complex, with two generators for each Reeb orbit, analogous to \cite{bo09}. The homology of this Morse-Bott chain complex is not the desired cylindrical contact homology, but rather a ``non-equivariant'' version of it. The cylindrical contact homology that we want can be regarded as an ``$S^1$-equivariant'' version of the latter homology, and recovering this requires an additional construction as in \cite{bo12}.}.

We deal with the first issue by using intersection theory of holomorphic curves to show that the troublesome breaking cannot occur for generic $J$. To deal with second issue, we use index calculations to show that one can already obtain the transversality needed to define $\partial$ and prove that $\partial^2=0$ using a generic  almost complex structure, without breaking the $S^1$ symmetry.

Next one would like to show that cylindrical contact homology is an invariant of three-manifolds with contact structures that admit dynamically convex contact forms, by counting holomorphic cylinders in cobordisms to define chain maps and chain homotopies. It turns out that generic ($S^1$-independent) almost complex structures do not give sufficient transversality to count index zero cylinders in cobordisms to define chain maps, see Remark~\ref{rem:special} below. Instead, as we explain in the sequel \cite{sequel}, one can prove this topological invariance using $S^1$-dependent almost complex structures similarly to \cite{bo12}. In fact, the proof of invariance shows that cylindrical contact homology lifts to an invariant with integer coefficients, see Remark~\ref{rem:chz}.

\subsection{Holomorphic cylinders}

We now set up some notation for holomorphic cylinders in the symplectization of a contact three-manifold.

Let $Y$ be a closed three-manifold with a contact form $\lambda$. Let $\xi=\Ker(\lambda)$ denote the associated contact structure, and let $R$ denote the associated Reeb vector field.

A {\em Reeb orbit\/} is a map $\gamma:\R/T\Z\to Y$ for some $T>0$ such that $\gamma'(t)=R(\gamma(t))$, modulo reparametrization. We do not assume that $\gamma$ is an embedding. For a Reeb orbit as above, the linearized Reeb flow for time $T$ defines a symplectic linear map
\begin{equation}
\label{eqn:lrt}
P_\gamma:(\xi_{\gamma(0)},d\lambda) \longrightarrow (\xi_{\gamma(0)},d\lambda).
\end{equation}
The Reeb orbit $\gamma$ is {\em nondegenerate\/} if $P_\gamma$ does not have $1$ as an eigenvalue.  The contact form $\lambda$ is called nondegenerate if all Reeb orbits are nondegenerate; generic contact forms have this property. Fix a nondegenerate contact form below.

A (nondegenerate) Reeb orbit $\gamma$ is {\em elliptic\/} if $P_\gamma$ has eigenvalues on the unit circle, {\em positive hyperbolic\/} if $P_\gamma$ has positive real eigenvalues, and {\em negative hyperbolic\/} if $P_\gamma$ has negative real eigenvalues. If $\tau$ is a homotopy class of trivializations of $\xi|_\gamma$, then the {\em Conley-Zehnder index\/} $\CZ_\tau(\gamma)\in\Z$ is defined, see the review in \S\ref{sec:imc}. The parity of the Conley-Zehnder index does not depend on the choice of trivialization $\tau$, and is even when $\gamma$ is positive hyperbolic and odd otherwise.

We say that an almost complex structure $J$ on $\R\times Y$ is {\em $\lambda$-compatible\/} if $J(\xi)=\xi$; $d\lambda(v,Jv)>0$ for nonzero $v\in\xi$; $J$ is invariant under translation of the $\R$ factor; and $J(\partial_s)=R$, where $s$ denotes the $\R$ coordinate. Fix such a $J$.

If $\gamma_+$ and $\gamma_-$ are Reeb orbits, we consider $J$-holomorphic cylinders between them, namely maps $u:\R\times S^1\to\R\times Y$ such that
\[
\partial_su+J\partial_tu=0,
\]
$\lim_{s\to\pm_\infty}\pi_\R(u(s,t))=\pm\infty$, and $\lim_{s\to\pm\infty}\pi_Y(u(s,\cdot))$ is a parametrization of $\gamma_\pm$. Here $\pi_\R$ and $\pi_Y$ denote the projections from $\R\times Y$ to $\R$ and $Y$ respectively. We say that $u$ has a ``positive end at $\gamma_+$'' and a ``negative end at $\gamma_-$''. We declare two such maps to be equivalent if they differ by translation and rotation of the domain $\R\times S^1$, and we denote the set of equivalence classes by $\M^J(\gamma_+,\gamma_-)$. Note that $\R$ acts on $\M^J(\gamma_+,\gamma_-)$ by translation of the $\R$ factor in $\R\times Y$.

Given $u$ as above, we define its {\em Fredholm index\/} by
\[
\op{ind}(u) = \CZ_\tau(\gamma_+) - \CZ_\tau(\gamma_-) + 2c_\tau(u).
\]
Here $\tau$ is a trivialization of $\xi$ over $\gamma_+$ and $\gamma_-$, and $c_\tau(u)$ denotes the relative first Chern class $c_1(u^*\xi,\tau)$, see \cite[\S2.5]{ir} or \cite[\S3.2]{bn}; the relative first Chern class vanishes when the trivialization $\tau$ extends to a trivialization of $u^*\xi$. The significance of the Fredholm index is that if $J$ is generic and $u$ is somewhere injective, then $\M^J(\gamma_+,\gamma_-)$ is naturally a manifold near $u$ of dimension $\op{ind}(u)$. Let $\mc{M}^J_k(\gamma_+,\gamma_-)$ denote the set of $u\in\mc{M}^J(\gamma_+,\gamma_-)$ with $\op{ind}(u)=k$.

\subsection{Cylindrical contact homology}

As above, let $\lambda$ be a nondegenerate contact form on the closed three-manifold $Y$, and let $J$ be a generic $\lambda$-compatible almost complex structure on $\R\times Y$. In the absence of certain kinds of contractible Reeb orbits, one would like to define cylindrical contact homology as follows. (The original definition is in \cite{egh}; we are using some different notation and conventions.)

A Reeb orbit $\gamma$ is said to be {\em bad\/}\footnote{In general, in any number of dimensions, a nondegenerate Reeb orbit is bad when it is an even multiple cover of another Reeb orbit whose Conley-Zehnder index has opposite parity.} if it is an even degree multiple cover of a negative hyperbolic orbit; otherwise $\gamma$ is called {\em good\/}.

Define $CC^\Q(Y,\lambda,J)$ to be the vector space over $\Q$ generated by the good Reeb orbits. One would like to define an operator
\[
\delta:CC^\Q(Y,\lambda,J) \longrightarrow CC^\Q(Y,\lambda,J)
\]
by the equation
\begin{equation}
\label{eqn:delta}
\delta\alpha = \sum_\beta\sum_{u\in\mc{M}^J_1(\alpha,\beta)/\R}\frac{\epsilon(u)}{d(u)}\beta.
\end{equation}
Here $\epsilon(u)\in\{\pm1\}$ is a sign associated to $u$ via a system of coherent orientations as in \cite{bm,fh}, while $d(u)\in\Z^{>0}$ is the covering multiplicity of $u$ (which is $1$ if and only if $u$ is somewhere injective). The definition \eqref{eqn:delta} only makes sense if the moduli spaces $\mc{M}^J_1(\alpha,\beta)/\R$ are compact and cut out transversely. 

Define another operator
\[
\kappa: CC^\Q(Y,\lambda,J) \longrightarrow CC^\Q(Y,\lambda,J)
\]
by
\[
\kappa(\alpha) = d(\alpha)\alpha,
\]
where $d(\alpha)\in\Z^{>0}$ denotes the covering multiplicity of $\alpha$. By counting ends of the moduli spaces $\mc{M}_2^J(\alpha,\beta)/\R$, one expects (in the absence of certain contractible Reeb orbits) to obtain the equation
\[
\delta\kappa\delta=0.
\]
This equation implies that
\[
\partial = \delta\kappa
\]
is a differential on $CC^\Q(Y,\lambda,J)$. 
The homology of the resulting chain complex $(CC^\Q(Y,\lambda,J),\partial)$ is the cylindrical contact homology $CH^\Q(Y,\lambda,J)$.

Note that a different choice of coherent orientations will lead to different signs in the differential, but the chain complexes will be canonically isomorphic. Note also that some papers use a different convention in which the differential, in our notation, is $\kappa\delta$ instead of $\delta\kappa$. The operator $\kappa$ defines an isomorphism between these two chain complexes, because $(\kappa\delta)\kappa=\kappa(\delta\kappa)$, cf.\ \cite[Rem.\ 3.4]{bee}.

\subsection{The main result}

Let $Y$ be a closed three-manifold with a nondegenerate contact form $\lambda$. Suppose that $\pi_2(Y)=0$, or more generally that $c_1(\xi)|_{\pi_2(Y)}=0$. Then for each contractible Reeb orbit $\gamma$, we can define the Conley-Zehnder index of $\gamma$ by $\CZ(\gamma)=\CZ_\tau(\gamma)$, where $\tau$ is a trivialization of $\xi|_\gamma$ which extends to a trivialization of $\xi$ over a disk bounded by $\gamma$.

\begin{definition}
(cf.\ \cite{hwz99})
Let $\lambda$ be a nondegenerate contact form on a closed three-manifold $Y$. We say that $\lambda$ is {\em dynamically convex\/} if either:
\begin{itemize}
\item
$\lambda$ has no contractible Reeb orbits, or
\item
$c_1(\xi)|_{\pi_2(Y)}=0$, and every contractible Reeb orbit $\gamma$ has $\CZ(\gamma)\ge 3$.
\end{itemize}
\end{definition}

\begin{example}
\cite{hwz99}
If $Y$ is a compact star-shaped (i.e.\ transverse to the radial vector field) hypersurface in $\R^4$, then
\[
\lambda = \frac{1}{2}\sum_{k=1}^2(x_kdy_k-y_kdx_k)
\]
restricts to a contact form on $Y$. If $Y$ is convex, then $\lambda$ is dynamically convex (if it is nondegenerate, which holds for generic $Y$).
\end{example}

The main result of this paper is the following:

\begin{theorem}
\label{thm:main}
Let $\lambda$ be a nondegenerate, dynamically convex contact form on a closed three-manifold $Y$. Suppose further that:
\begin{description}
\item{(*)}
A contractible Reeb orbit $\gamma$ has $\CZ(\gamma)=3$ only if $\gamma$ is embedded.
\end{description}
Then for generic $\lambda$-compatible almost complex structures $J$ on $\R\times Y$, the operator $\delta$ in \eqref{eqn:delta} is well defined and satisfies $\delta\kappa\delta=0$, so $(CC^\Q(Y,\lambda,J),\partial)$ is a well-defined chain complex where $\partial=\delta\kappa$.
\end{theorem}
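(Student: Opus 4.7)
The plan is to prove (a) that for generic $\lambda$-compatible $J$, the moduli space $\M^J_1(\alpha,\beta)/\R$ is a finite signed set for each pair of good Reeb orbits $\alpha,\beta$, so that $\delta\alpha$ is a well-defined element of $CC^\Q(Y,\lambda,J)$; and (b) that the compactified one-dimensional moduli space $\overline{\M^J_2(\alpha,\beta)/\R}$ has oriented boundary consisting precisely of broken pairs of index-$1$ cylinders, forcing $\delta\kappa\delta = 0$ after a standard gluing and orientation bookkeeping.

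For (a), transversality of somewhere injective cylinders for generic $J$ is Dragnev's theorem. The subtlety is multiply covered index-$1$ cylinders $u = u' \circ \phi$, for which regularity at $u$ does not follow from regularity at $u'$. I would handle these by index bounds: using the standard Conley-Zehnder iteration inequalities in three dimensions (exact multiplicativity for hyperbolic orbits, floor-of-rotation-number formulas for elliptic orbits) together with the goodness of $\alpha$ and $\beta$, one shows that if $u'$ is not a trivial cylinder then $\op{ind}(u) \ge \op{ind}(u') \ge 1$, with equality ruled out by dynamical convexity and the parity restrictions. Next, compactness of $\M^J_1(\alpha,\beta)/\R$ follows from SFT compactness once non-cylindrical limits are excluded. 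The potentially dangerous limits are buildings with extra ends on contractible Reeb orbits; but dynamical convexity plus $c_1(\xi)|_{\pi_2(Y)}=0$ forces any holomorphic plane filling a contractible orbit $\gamma$ to have Fredholm index $\CZ(\gamma)-1 \ge 2$, so such a plane cannot appear as a component of an index-$1$ limit for generic $J$.

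For (b), the main obstacle---and the issue overlooked in \cite{bce}---is the ``$1$-negative'' breaking of Proposition~\ref{prop:1neg}(c): an index-zero genus-zero curve $v$ with positive end $\alpha$ and negative ends at $\beta$ and at some contractible Reeb orbit $\gamma$, glued to an index-$2$ plane $w$ at $\gamma$. The total Fredholm index is $2$, so this configuration is not excluded by dimension counting alone. I would rule it out using the intersection theory of punctured holomorphic curves \`a la Siefring--Hutchings: positivity of intersections of $v$ and $w$ with a suitable branched cover of the trivial cylinder $\R\times\gamma$, combined with writhe and asymptotic winding-number bounds coming from the eigenvalues of the asymptotic operator at $\gamma$, yields a contradiction unless $\gamma$ is a nontrivial multiple cover with $\CZ(\gamma)=3$---the precise configuration excluded by assumption (*). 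This is where (*) is essential. With this ruled out, the remaining ends of $\M^J_2(\alpha,\beta)/\R$ are two-level broken $(1,1)$-cylinders.

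To finish, I would verify that these $(1,1)$-breakings reproduce the matrix coefficients of $\delta\kappa\delta$. The relevant combinatorial point is that gluing a pair of index-$1$ cylinders along a multiply covered intermediate orbit $\alpha'$ of multiplicity $d(\alpha')$ produces exactly $d(\alpha')$ distinct index-$2$ cylinders, which accounts for the factor $\kappa=d(\alpha')$ in $\delta\kappa\delta$; this is the standard gluing analysis as in \cite{bm,fh}, and the signs are handled by coherent orientations. The identity $\delta\kappa\delta=0$ then follows from the vanishing of the signed count of boundary points of the compact oriented $1$-manifold $\overline{\M^J_2(\alpha,\beta)/\R}$. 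The hard step is the intersection-theoretic exclusion of the $1$-negative breaking in (b); the transversality input in (a) is controlled by dynamical convexity and standard index arithmetic, and the concluding gluing argument is routine.
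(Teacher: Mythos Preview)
Your overall architecture matches the paper's, and you correctly identify the role of hypothesis~(*) in conjunction with the intersection-theoretic argument. However, there are two genuine gaps.

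\textbf{Transversality for multiply covered cylinders.} Index bounds alone do not give regularity: knowing $\op{ind}(u)\ge\op{ind}(\overline{u})\ge 1$ does not make $D_u$ surjective, and your phrase ``equality ruled out by dynamical convexity and the parity restrictions'' is unclear---multiply covered index-$1$ cylinders between good orbits do exist (e.g.\ covers of a cylinder from an elliptic orbit with small rotation number to a positive hyperbolic orbit). The paper handles this via \emph{automatic transversality}: one first shows (using $\op{ind}(\overline{u})\le 2$ and genericity) that $\overline{u}$, hence $u$, is immersed, and then applies the criterion $2g-2+h_+(u)<\op{ind}(u)$ to conclude $D_u$ is surjective. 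For $\mc{M}_2^J$ one must further check that the deck group of $u\to\overline{u}$ acts trivially on $\Ker(D_u)$ (so the moduli space is a manifold, not an orbifold); this uses that $\op{ind}(u)=\op{ind}(\overline{u})$, which in turn requires the goodness of the ends to exclude the case $\op{ind}(u)=2$, $\op{ind}(\overline{u})=1$.

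\textbf{Gluing combinatorics.} Your claim that gluing along $\gamma_0$ produces exactly $d(\gamma_0)$ index-$2$ cylinders is correct only when both $u_\pm$ are somewhere injective. In general the number of ends converging to $(u_+,u_-)$ is $k\,d(\gamma_0)/d(u_+)d(u_-)$ where $k=\gcd(d(u_+),d(u_-))$, and each such end consists of cylinders of covering multiplicity $k$; both facts are needed so that the weighted boundary count $\sum_X\#\partial X/d(X)$ reproduces $\langle\delta\kappa\delta\gamma_+,\gamma_-\rangle$. You also omit the case where $\gamma_0$ is bad: there one shows $d(u_\pm)=1$ (a parity argument), so there are $d(\gamma_0)$ ends, and the coherent orientation theory of \cite{bm} makes half of them positive and half negative. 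Finally, a minor point on the bad breaking: the classification in Proposition~\ref{prop:1neg}(c)(iii) forces the top level to be a branched cover of a \emph{trivial} cylinder over an embedded $\gamma$ (so $\alpha,\beta$ are iterates of that same $\gamma$), and the writhe/adjunction argument of Proposition~\ref{prop:nobadbreak} is run on a nearby cylinder inside a tubular neighborhood of $\gamma$, not by intersecting $v$ and $w$ with an auxiliary curve.
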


\begin{remark}[\bf on the hypotheses]
\begin{description}
\item{(a)}
The hypothesis (*) automatically holds when $\pi_1(Y)$ contains no torsion, because if $\gamma$ is a contractible Reeb orbit and $\gamma^d$ denotes its $d$-fold cover, then $\CZ(\gamma^d)\ge d\CZ(\gamma)-d+1$.
\item{(b)}
In general, the hypothesis (*) can be removed from Theorem~\ref{thm:main} assuming a certain technical conjecture on the asymptotics of holomorphic curves, see Remark~\ref{remark:technical}.
\item{(c)}
We expect that with similar technical work, the hypothesis of dynamical convexity can be weakened to the hypothesis that all contractible Reeb orbits $\gamma$ have $\CZ(\gamma)\ge 2$, provided that whenever $\CZ(\gamma)=2$, the count of holomorphic planes asymptotic to $\gamma$ is zero (this last condition will hold for example if $(Y,\lambda)$ has an exact filling).
\end{description}
\end{remark}

\begin{remark}[\bf grading]
The chain complex $CC^\Q$ splits into a direct sum of subcomplexes according to the homotopy classes of the Reeb orbits in the free loop space of $Y$. The condition $c_1(\xi)|_{\pi_2(Y)}=0$ (which we are assuming when there are contractible Reeb orbits) implies that each of these subcomplexes has a relative $\Z$-grading. The subcomplex generated by contractible Reeb orbits has a canonical absolute $\Z$-grading by $\CZ-1$.
\end{remark}
 
The following theorem will be proved in the sequel \cite{sequel}:

\begin{theorem}
\label{thm:invariance}
The cylindrical contact homology $CH^\Q$ is an invariant of pairs $(Y,\xi)$ where $Y$ is a closed three-manifold, and $\xi$ is a contact structure on $Y$ that admits a dynamically convex contact form $\lambda$ satisfying the condition (*).
\end{theorem}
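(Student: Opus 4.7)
The plan is to prove invariance by mimicking the Bourgeois--Oancea construction of $S^1$-equivariant symplectic homology. Given a nondegenerate, dynamically convex contact form $\lambda$ on $(Y,\xi)$ satisfying (*), I would first define a non-equivariant chain complex $NCC^\Q(Y,\lambda,J)$ using an $S^1$-dependent $\lambda$-compatible almost complex structure $J$, with two generators $\check\gamma,\hat\gamma$ for each good Reeb orbit $\gamma$. Transversality for index $\le 1$ moduli spaces is standard for $S^1$-dependent $J$, and compactness of the relevant moduli spaces reduces to the same intersection-theoretic and dynamical-convexity arguments used to prove Theorem~\ref{thm:main}. One then obtains a new chain complex $CC^\Q_{S^1}$ as the $S^1$-equivariant chain complex built from $NCC^\Q$ and a family of almost complex structures $\{J_z\}_{z\in S^{2N+1}}$ in the limit $N\to\infty$, analogous to \cite{bo12}. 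A Morse-Bott/spectral sequence comparison should identify the homology of $CC^\Q_{S^1}$ with $CH^\Q(Y,\lambda,J)$ as defined in Theorem~\ref{thm:main}: the two generators $\check\gamma,\hat\gamma$ pair up under the $S^1$-action to recover a single generator weighted by the multiplicity $d(\gamma)$, so that the resulting differential matches $\delta\kappa$.

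For invariance I would take any two nondegenerate dynamically convex contact forms $\lambda_\pm$ on $(Y,\xi)$ satisfying (*) and interpolate via an exact symplectic cobordism $(X,\omega)$ with positive end $(Y,\lambda_+)$ and negative end $(Y,\lambda_-)$. Choosing an $S^1$-dependent almost complex structure on $X$ (parametrized over $S^{2N+1}$ to obtain the equivariant map) and counting index-$0$ cylinders yields a chain map $\Phi:CC^\Q_{S^1}(Y,\lambda_+)\to CC^\Q_{S^1}(Y,\lambda_-)$. Standard cobordism constructions produce chain homotopies between different choices of data, and the composition of a cobordism with its reverse is chain homotopic to a small deformation of the identity, giving that $\Phi$ is a quasi-isomorphism. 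Combined with the comparison in the previous paragraph, this proves invariance of $CH^\Q$. Integer coefficients can be obtained once signs from a coherent orientation system are carried through the non-equivariant complex, as indicated in Remark~\ref{rem:chz}.

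The main obstacle will be compactness and transversality for moduli spaces in the cobordism. As Remark~\ref{rem:special} already notes, $S^1$-independent structures are inadequate for index-$0$ cylinders in cobordisms, which forces the $S^1$-dependent setup; but using $S^1$-dependent $J$ one loses the automatic exclusion of certain bad breakings that in the symplectization case was handled via the $\R$-translation action. The hardest step will therefore be ruling out the cobordism analogue of the dangerous breaking in Proposition~\ref{prop:1neg}(c), where a broken configuration for an index-$0$ cylinder in $X$ contains a piece with a single negative puncture at a multiply covered Reeb orbit. This requires extending Siefring-type intersection bounds to holomorphic curves in cobordisms and using condition (*) together with dynamical convexity to force any such capping component to have non-negative index, so that it cannot arise generically. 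Once these compactness results are in place, the algebraic mechanics of the equivariant construction follow the template of \cite{bo12}, and the theorem follows.
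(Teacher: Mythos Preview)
The paper does not contain a proof of Theorem~\ref{thm:invariance}; it is explicitly deferred to the sequel~\cite{sequel} (see the sentence immediately preceding the theorem, as well as the abstract and the end of \S1.1). So there is no proof here to compare your proposal against. That said, your outline is broadly consistent with what the paper indicates the sequel will do (see Remark~\ref{rem:chz}(a) and the footnote in \S1.1): construct a non-equivariant ``Morse--Bott'' chain complex using $S^1$-dependent almost complex structures, pass to an $S^1$-equivariant version as in~\cite{bo12}, prove invariance of that via cobordisms, and compare with $CH^\Q$.

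A few points where your sketch diverges from the hints the paper gives. First, Remark~\ref{rem:chz}(a) and the footnote in \S1.1 say the non-equivariant complex has two generators for \emph{every} Reeb orbit, good or bad, not only for good ones as you write; bad orbits do not drop out before passing to the equivariant construction. Second, your ``Morse--Bott/spectral sequence comparison'' between the equivariant homology and $CH^\Q(Y,\lambda,J)$ is exactly the step Remark~\ref{rem:chz}(b) flags as delicate: the existing result \cite[Thm.~4.3]{bo12} requires transversality hypotheses that can fail for \emph{every} $J$ whenever buildings as in Proposition~\ref{prop:1neg}(c)(iii) exist, so this comparison is not a routine citation and presumably needs new arguments in~\cite{sequel}. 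Third, your diagnosis of the ``hardest step'' as extending the intersection-theoretic exclusion of \S\ref{sec:breaking} to cobordisms is at least not what the paper emphasizes: the reason for passing to $S^1$-dependent $J$ and the non-equivariant complex is precisely to obtain honest transversality for (possibly multiply covered) cylinders, which is meant to sidestep arguments of the type in Proposition~\ref{prop:nobadbreak}. Based on the paper's remarks, the substantive new work lies in the equivariant construction and the comparison with $CH^\Q$, rather than in a cobordism analogue of \S\ref{sec:breaking}.
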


\begin{remark}[\bf local and sutured versions]
Although Theorems~\ref{thm:main} and \ref{thm:invariance} are stated for closed three-manifolds, their proofs also work for open three-manifolds in situations where Gromov compactness still holds. For example:

(a) Assuming suitable transversality, Hryniewicz-Macarini \cite{hm} define the {\em local contact homology\/} $CH^\Q(\lambda,\gamma^m)$, where $\gamma$ is an embedded (possibly degenerate) Reeb orbit in a contact manifold $(Y,\lambda)$ such that all iterates of $\gamma$ are isolated Reeb orbits, and $m$ is a positive integer. To define this, let $N$ be a tubular neighborhood of $\gamma$ in $Y$, and let $\lambda'$ be a nondegenerate contact form in $N$ obtained by a perturbation of $\lambda$ which is small with respect to $m$ (in particular $\lambda'$ has no short contractible Reeb orbits). Then
$CH^\Q(\lambda,\gamma^m)$ is the cylindrical contact homology of $\lambda'$ in $N$ for Reeb orbits that wind $m$ times around $N$. When $\dim(Y)=3$, the proof of Theorem~\ref{thm:main} shows that the local contact homology chain complex is defined for generic $J$, and the proof of Theorem~\ref{thm:invariance} shows that the homology depends only on the contact form in a neighborhood of $\gamma$.

(b) The proofs of Theorems~\ref{thm:main} and \ref{thm:invariance} give a construction of cylindrical contact homology on sutured contact three-manifolds \cite{cghh} admitting dynamically convex contact forms satisfying (*).
\end{remark}

\begin{remark}[\bf coefficients]
The differential $\partial=\delta\kappa$, as well as the alternate differential $\kappa\delta$, in fact have integer coefficients (because the covering multiplicity of a holomorphic cylinder always divides the covering multiplicities of the Reeb orbits at its ends). However we do not expect the homologies of these differentials over $\Z$ to be invariant or isomorphic to each other in general.
\end{remark}

\begin{remark}[\bf relation with Bourgeois-Oancea]
\label{rem:chz}
(a)
 The proof of Theorem~\ref{thm:invariance} will show that there is in fact an invariant $CH^\Z$ (of pairs $(Y,\xi)$ as in the theorem) which is the homology of a chain complex $CC^\Z$ over $\Z$, such that $CH^\Q=CH^\Z\tensor\Q$. However the definition of $CC^\Z$ is quite different; it starts with a ``Morse-Bott'' chain complex with two generators for each (good or bad) Reeb orbit as in \cite{bo09}, and then passes to an $S^1$-equivariant version similarly to \cite{bo12}. We expect that $CH^\Z$ agrees with the invariant $SH_*^{S^1}(Y,\xi)$ defined in \cite[\S4.1.2]{bo12}; the latter is a version of $S^1$-equivariant symplectic homology which is an invariant of contact structures that admit dynamically convex contact forms\footnote{Here $SH_*^{S^1}(Y,\xi)$ denotes the sum over all free homotopy class of loops $c$ of the invariant $SH_*^{c,S^1}(Y,\xi)$ defined in \cite[\S4.1.2]{bo12}.}.

(b)
It is shown in \cite[Thm.\ 4.3]{bo12} that if $\lambda$ is dynamically convex and $J$ satisfies certain transversality assumptions, then $CH^\Q(Y,\lambda,J)$ is isomorphic to $SH_*^{S^1}(Y,\xi)\tensor \Q$. However we cannot invoke this theorem to prove the invariance in Theorem~\ref{thm:invariance}, because given a dynamically convex $\lambda$, it is often impossible to find any $J$ satisfying the transversality hypotheses\footnote{
For example, if the shortest Reeb orbit $\gamma$ is contractible and elliptic and has $\CZ(\gamma)=3$, and if the count of holomorphic planes asymptotic to $\gamma$ with a point constraint is nonzero, then for any $J$, holomorphic buildings as in Proposition~\ref{prop:1neg}(c)(iii) will exist, which violates the hypotheses of \cite[Thm.\ 4.3]{bo12}.} of \cite[Thm.\ 4.3]{bo12}.
\end{remark}

The rest of this paper is organized as follows. \S\ref{sec:index} carries out some index calculations which are needed in the compactness arguments in the proof of Theorem~\ref{thm:main}. \S\ref{sec:breaking} rules out a certain kind of breaking of index 2 cylinders which, if it happened, would cause a problem for the proof that $\partial^2=0$. Finally, \S\ref{sec:gluing} discusses transversality and gluing and completes the proof of Theorem~\ref{thm:main}.

\section{Index calculations}
\label{sec:index}

In this section we carry out some index calculations which are needed to rule out various bad degenerations in the compactness arguments to prove that $\partial$ is defined and $\partial^2=0$.
In this section, we always assume that $\lambda$ is a nondegenerate contact form on a three-manifold $Y$, and $J$ is a $\lambda$-compatible almost complex structure on $\R\times Y$. We do not assume that $\lambda$ is dynamically convex or that $J$ is generic unless otherwise stated.

\subsection{Estimates on the index of multiple covers}
\label{sec:imc}

We first obtain some estimates on the Fredholm index of certain kinds of multiply covered curves, without assuming dynamical convexity.

Let $u$ be a $J$-holomorphic curve in $\R\times Y$ with positive ends at Reeb orbits $\alpha_1,\ldots,\alpha_k$ and negative ends at Reeb orbits $\beta_1,\ldots,\beta_l$. Note that the Reeb orbits $\alpha_i$ and $\beta_j$ are not necessarily embedded. Recall that the Fredholm index of $u$ is given by the formula
\begin{equation}
\label{eqn:ind}
\op{ind}(u)=-\chi(u) + 2c_\tau(u) + \sum_{i=1}^k\CZ_\tau(\alpha_i) - \sum_{j=1}^l\CZ_\tau(\beta_j).
\end{equation}
Here $\chi(u)$ denotes the Euler characteristic of the domain of $u$, so if $u$ is irreducible of genus $g$ then
\begin{equation}
\label{eqn:chi}
\chi(u) = 2-2g-k-l.
\end{equation}
Also $\tau$ is a trivialization of $\xi$ over the Reeb orbits $\alpha_i$ and $\beta_j$; and $c_\tau$ and $\CZ_\tau$ denote the relative first Chern class of $u^*\xi$ and Conley-Zehnder index with respect to $\tau$ as before.

In three dimensions there is a useful explicit formula for the Conley-Zehnder index:
\[
\CZ_\tau(\gamma) = \floor{\theta} + \ceil{\theta}
\]
where $\theta$ denotes the ``rotation number'' of $\gamma$ with respect to $\tau$. If $\gamma$ is hyperbolic, then $\theta$ is the number of times that the eigenspaces of the linearized return map \eqref{eqn:lrt} rotate with respect to $\tau$ as one goes around $\gamma$; this is an integer if $\gamma$ is positive hyperbolic and an integer plus $1/2$ if $\gamma$ is negative hyperbolic. If $\gamma$ is elliptic then $\theta$ is an irrational number (due to our assumption that all possibly multiply covered Reeb orbits are nondegenerate), see \cite[\S3.2]{bn}. Changing the trivialization $\tau$ will shift the rotation number $\theta$ by an integer. Also, if $m$ is a positive integer and if $\gamma^m$ denotes the Reeb orbit that is a $m$-fold multiple cover of $\gamma$, then
\begin{equation}
\label{eqn:CZiteration}
\CZ_\tau(\gamma^m) = \floor{m\theta} + \ceil{m\theta},
\end{equation}
where $\theta$ still denotes the rotation number of $\gamma$ with respect to $\tau$.

We define a {\em trivial cylinder\/} to be a $J$-holomorphic cylinder $\R\times\gamma$ in $\R\times Y$ where $\gamma$ is a Reeb orbit. We do not require $\gamma$ to be embedded.

\begin{lemma}
\label{lem:ht}
\cite[Lem.\ 1.7]{ht1}
If $u$ is a $J$-holomorphic curve in $\R\times Y$ which is a branched cover\footnote{In this paper, ``branched covers'' also include coverings without branch points.} of a trivial cylinder, then $\op{ind}(u)\ge 0$.
\qed
\end{lemma}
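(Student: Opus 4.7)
The plan is to reduce the inequality to a purely combinatorial statement about floor/ceiling functions, exploiting the very restrictive form of the curve $u$. Let $\gamma$ be the embedded Reeb orbit underlying the trivial cylinder being covered, and let $d$ be the degree of the branched cover. Let the positive ends of $u$ cover $\gamma$ with multiplicities $a_1,\ldots,a_k$ and the negative ends with multiplicities $b_1,\ldots,b_l$, so that $\sum_i a_i = \sum_j b_j = d$. The key geometric observation is that a trivialization $\tau$ of $\xi|_\gamma$ pulls back to a trivialization of $u^*\xi$, so $c_\tau(u)=0$. This reduces the Fredholm index formula \eqref{eqn:ind} to
\[
\op{ind}(u) = -\chi(u) + \sum_{i=1}^k\CZ_\tau(\gamma^{a_i}) - \sum_{j=1}^l\CZ_\tau(\gamma^{b_j}).
\]

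Next I would invoke Riemann--Hurwitz for the induced branched cover of the compactified cylinder $S^2$. Counting the branching at the punctures exactly absorbs the $-(k+l)$ coming from the ends, so if $B_{\mathrm{int}}\ge 0$ denotes the total interior branching and $g$ the genus of the domain, one obtains $-\chi(u) = 2g + k + l - 2 + \text{(something)}$; more precisely $-\chi(u)=B_{\mathrm{int}}$ and $B_{\mathrm{int}} = k+l-2+2g$.

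Now I would split into two cases according to the type of $\gamma$. If $\gamma$ is hyperbolic with rotation number $\theta \in \tfrac12\Z$, the iteration formula \eqref{eqn:CZiteration} simplifies to $\CZ_\tau(\gamma^m)=2m\theta$, which is linear in $m$. Since $\sum a_i = \sum b_j$, the Conley--Zehnder sum cancels and $\op{ind}(u)=B_{\mathrm{int}} = k+l-2+2g \ge 0$ because $k,l\ge 1$. If $\gamma$ is elliptic, $\theta$ is irrational, so $\CZ_\tau(\gamma^m) = 2m\theta + 1 - 2\{m\theta\}$ where $\{\cdot\}$ denotes fractional part. Substituting and rearranging, the $d\theta$ terms cancel and I obtain
\[
\tfrac{1}{2}\op{ind}(u) = (k-1) + g + \Bigl(\sum_j\{b_j\theta\} - \sum_i\{a_i\theta\}\Bigr).
\]
Since $\sum_i a_i\theta = d\theta$, the quantity $m \eqdef \sum_i\{a_i\theta\} - \{d\theta\}$ is a non-negative integer, and similarly for $m'$ on the negative side; in particular the parenthesized term equals $m'-m\in\Z$, confirming the index is even. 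Using $\{a_i\theta\}<1$ gives $m\le k-1$, so $\op{ind}(u)/2 = (k-1-m) + g + m' \ge 0$.

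The main obstacle I anticipate is keeping the bookkeeping honest: one must verify that the passage from the open domain to its branched-cover compactification really produces the stated value of $-\chi(u)$ (including the contribution of branching at the punctures), and that the simple estimate $m \le k-1$ on the ``fractional-part defect'' is indeed sharp enough—the argument fails if one tries to prove the inequality without first cancelling the $d\theta$ contributions, and the elliptic case is genuinely tighter than the hyperbolic one because then every summand $\{a_i\theta\}$ is strictly positive.
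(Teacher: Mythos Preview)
Your argument is correct. The paper itself does not give a proof of this lemma; it simply quotes \cite[Lem.~1.7]{ht1} and places a \qed. What you have written is essentially the proof given in that reference: reduce to $c_\tau(u)=0$ by pulling back a trivialization over the embedded orbit $\gamma$, compute $-\chi(u)=2g+k+l-2$, and then split on the type of $\gamma$. In the hyperbolic case $\CZ_\tau(\gamma^m)=2m\theta$ is linear and the Conley--Zehnder terms cancel, leaving $\op{ind}(u)=2g+k+l-2\ge 0$. In the elliptic case one writes $\CZ_\tau(\gamma^m)=2m\theta+1-2\{m\theta\}$ and uses the elementary integrality bound $\sum_i\{a_i\theta\}-\{d\theta\}\le k-1$, exactly as you do.

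A couple of small expository points. First, the identity $-\chi(u)=B_{\mathrm{int}}$ is really two separate facts: $-\chi(u)=2g+k+l-2$ is just the Euler characteristic of a punctured surface, while $B_{\mathrm{int}}=2g+k+l-2$ is Riemann--Hurwitz for the compactified cover of $S^2$ after subtracting the boundary ramification $(d-k)+(d-l)$. You only need the first in the elliptic case; Riemann--Hurwitz (equivalently $B_{\mathrm{int}}\ge 0$) is what makes the hyperbolic case go through. Second, you should say explicitly why $k,l\ge 1$: a proper branched cover of $\R\times S^1$ must surject onto both ends. With those clarifications the write-up would be complete.
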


\begin{lemma}
\label{lem:bestimate}
Let $u$ be a $J$-holomorphic curve in $\R\times Y$ with genus zero, one positive end, and an arbitrary number of negative ends. Let $\overline{u}$ denote the somewhere injective curve covered by $u$, and let $d$ denote the covering multiplicity of $u$ over $\overline{u}$. Let $b$ denote the number of ramification points of this cover, counted with multiplicity. Then
\begin{equation}
\label{eqn:bestimate}
\op{ind}(u)\ge d\op{ind}(\overline{u}) + 2(1-d+b).
\end{equation}
\end{lemma}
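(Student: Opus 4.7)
The plan is to unfold $\op{ind}(u) - d\op{ind}(\overline{u})$ using the index formula \eqref{eqn:ind}, handling the Euler characteristic contribution via Riemann-Hurwitz, the relative Chern class contribution via naturality under pullback, and the Conley-Zehnder contribution via the elementary iteration bounds implicit in \eqref{eqn:CZiteration}.

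First I would pin down the combinatorics of the branched cover $\phi$ of underlying Riemann surfaces. Because $u$ is genus zero with one positive end, Riemann-Hurwitz forces $\overline{u}$ to be genus zero with exactly one positive end as well, and forces the unique positive puncture of $u$ to have ramification index $d$. Writing $\beta_1,\ldots,\beta_l$ for the negative ends of $\overline{u}$ and $e_{j,1},\ldots,e_{j,s_j}$ for the ramification indices of their preimages in $u$ (so $\sum_k e_{j,k}=d$ for each $j$), the total number of negative ends of $u$ is $L_-\eqdef\sum_j s_j$, and the Riemann-Hurwitz relation $\chi(u)=d\chi(\overline{u})-b$ becomes the arithmetic identity
\[
dl - L_- = d - 1 - b.
\]

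Next I would subtract $d\op{ind}(\overline{u})$ from $\op{ind}(u)$ term by term. The Euler characteristic contributes $d\chi(\overline{u})-\chi(u)=b$. Since $u^*\xi = \phi^*(\overline{u}^*\xi)$, the relative Chern class multiplies, $c_\tau(u) = d\,c_\tau(\overline{u})$, so it contributes zero. The Conley-Zehnder part collects into
\[
\bigl(\CZ_\tau(\alpha^d)-d\CZ_\tau(\alpha)\bigr)+\sum_{j,k}\bigl(e_{j,k}\CZ_\tau(\beta_j)-\CZ_\tau(\beta_j^{e_{j,k}})\bigr),
\]
where $\alpha$ is the positive orbit of $\overline{u}$.

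Finally I would invoke the elementary iteration bound $|\CZ_\tau(\gamma^m)-m\CZ_\tau(\gamma)|\le m-1$, which is immediate from the formula $\CZ_\tau(\gamma^m)=\lfloor m\theta\rfloor+\lceil m\theta\rceil$. Its lower form applied at the positive end gives $\CZ_\tau(\alpha^d)-d\CZ_\tau(\alpha)\ge -(d-1)$, and its upper form applied at each negative preimage gives $e_{j,k}\CZ_\tau(\beta_j)-\CZ_\tau(\beta_j^{e_{j,k}})\ge -(e_{j,k}-1)$; these sum to $-(dl-L_-)$. Assembling the pieces,
\[
\op{ind}(u)-d\op{ind}(\overline{u}) \ge b - (d-1) - (dl-L_-),
\]
and substituting the Riemann-Hurwitz identity produces exactly $2(1-d+b)$. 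No individual step is especially deep; the one point requiring care is lining up the algebra so that Riemann-Hurwitz delivers the precise cancellation needed to absorb the aggregated CZ iteration slacks at the negative punctures.
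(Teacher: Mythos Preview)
Your argument is correct and follows essentially the same route as the paper: Riemann--Hurwitz for the Euler characteristic term, multiplicativity for the relative Chern class (the paper instead normalizes $c_\tau(\overline{u})=0$, which is equivalent), and the iteration bound $|\CZ_\tau(\gamma^m)-m\CZ_\tau(\gamma)|\le m-1$ at each end (the paper phrases this via the subadditivity form $|\CZ(\gamma^{m_1+m_2})-\CZ(\gamma^{m_1})-\CZ(\gamma^{m_2})|\le 1$ and aggregates the negative ends into a single inequality $\sum\CZ(\gamma_i)\le d\sum\CZ(\beta_j)+(dk-n)$, which is exactly your $-(dl-L_-)$). The only extra bookkeeping you do that the paper leaves implicit is the verification that $\overline{u}$ has genus zero with a single positive end of ramification index $d$; this is fine.
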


\begin{proof}
We can choose the trivialization $\tau$ so that $c_\tau(\overline{u})=0$, which implies that $c_\tau(u)=0$ also. We fix such a trivialization $\tau$ and write $\CZ$ as a shorthand for $\CZ_\tau$.

Suppose that $\overline{u}$ has positive end at $\alpha$ and negative ends at $\beta_1,\ldots,\beta_k$. Since $\overline{u}$ must have genus zero, it follows from \eqref{eqn:ind} and \eqref{eqn:chi} that
\begin{equation}
\label{eqn:indubar}
\op{ind}(\overline{u}) = k-1 + \CZ(\alpha) - \sum_{i=1}^k\CZ(\beta_i).
\end{equation}

To obtain a similar formula for $\op{ind}(u)$, let $n$ denote the number of negative ends of $u$. By Riemann-Hurwitz we have
\[
\chi(u) = d\chi(\overline{u}) - b,
\]
which means that
\begin{equation}
\label{eqn:RH}
1-n = d(1-k) - b.
\end{equation}
Let $\gamma_1,\ldots,\gamma_n$ denote the Reeb orbits at which $u$ has negative ends (these are covers of $\beta_1,\ldots,\beta_k$). By the iteration formula \eqref{eqn:CZiteration} for the Conley-Zehnder index, we have
\[
\left|\CZ\left(\gamma^{m_1+m_2}\right) - \CZ\left(\gamma^{m_1}\right) - \CZ\left(\gamma^{m_2}\right) \right|\le 1,
\]
assuming that all Conley-Zehnder indices are computed using the same trivialization of $\xi|_\gamma$. It follows from this that
\begin{equation}
\label{eqn:CZalphad}
\CZ(\alpha^d) \ge d \CZ(\alpha) - (d-1)
\end{equation}
and
\begin{equation}
\label{eqn:CZgammai}
\sum_{i=1}^n\CZ(\gamma_i) \le d\sum_{i=1}^k\CZ(\beta_i) + (dk-n).
\end{equation}
Using the above two inequalities, then using equation \eqref{eqn:indubar}, and finally using equation \eqref{eqn:RH}, we obtain
\[
\begin{split}
\op{ind}(u) &= n-1 + \CZ(\alpha^d) - \sum_{i=1}^n \CZ(\gamma_i)\\
&\ge n-1 + d \CZ(\alpha)-(d-1) -d\sum_{i=1}^k\CZ(\beta_i) - (dk-n)\\
&= d\op{ind}(\overline{u}) + 2(n-dk)\\
&= d\op{ind}(\overline{u}) +2(1-d+b).
\end{split}
\]
\end{proof}

We now use Lemma~\ref{lem:bestimate} to deduce two index estimates in the case when $J$ is generic which will be needed below.

\begin{lemma}
\label{lem:cylestimate}
Assume that $J$ is generic. Let $u$ be a genus zero holomorphic curve with one positive end and $n$ negative ends. Suppose that the somewhere injective curve $\overline{u}$ underlying $u$ is a nontrivial cylinder. Then
\[
\op{ind}(u) \ge n.
\]
\end{lemma}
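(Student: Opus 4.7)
The plan is to combine Lemma~\ref{lem:bestimate} with a parity argument to squeeze out the extra index needed.

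First I would apply Lemma~\ref{lem:bestimate} directly. Since $\overline{u}$ is a cylinder we have $\chi(\overline{u}) = 0$, and since $u$ has genus zero with one positive and $n$ negative ends we have $\chi(u) = 1-n$, so the Riemann-Hurwitz formula \eqref{eqn:RH} yields $b = n-1$. Substituting into \eqref{eqn:bestimate} gives
\[
\op{ind}(u) \ge d\,\op{ind}(\overline{u}) + 2(n-d).
\]
Moreover, since $\overline{u}$ is somewhere injective and not a trivial cylinder, and $J$ is generic, the moduli space through $\overline{u}$ is a manifold of dimension $\op{ind}(\overline{u})$ on which the $\R$-action by target translation is free, forcing $\op{ind}(\overline{u}) \ge 1$. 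When $\op{ind}(\overline{u}) \ge 2$, the displayed bound already gives $\op{ind}(u) \ge 2n \ge n$, which finishes that case.

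The remaining case is $\op{ind}(\overline{u}) = 1$, where the general bound degrades to $2n-d$, which is strictly less than $n$ when $d > n$. Here I would choose $\tau$ with $c_\tau(\overline{u}) = 0$, so that $\CZ_\tau(\alpha) - \CZ_\tau(\beta) = 1$, where $\alpha, \beta$ are the positive and negative asymptotes of $\overline{u}$. Because positive hyperbolic orbits have even Conley-Zehnder index while elliptic and negative hyperbolic orbits have odd index, parity forces exactly one of $\alpha, \beta$ to be positive hyperbolic; in particular, at least one of the two asymptotes is hyperbolic. For any hyperbolic orbit $\gamma$, the iteration formula \eqref{eqn:CZiteration} gives the \emph{equality} $\CZ_\tau(\gamma^m) = m\,\CZ_\tau(\gamma)$, which improves the one-sided estimates \eqref{eqn:CZalphad} and \eqref{eqn:CZgammai} used inside Lemma~\ref{lem:bestimate} by $d-1$ or $d-n$ respectively. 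Re-running the calculation in the proof of Lemma~\ref{lem:bestimate} with this identity applied to whichever end is hyperbolic yields $\op{ind}(u) \ge 2n-1$ if $\alpha$ is hyperbolic, or $\op{ind}(u) \ge n$ if $\beta$ is hyperbolic; either way $\op{ind}(u) \ge n$.

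The main obstacle is the subcase $\op{ind}(\overline{u}) = 1$ with $d > n$: there the generic estimate of Lemma~\ref{lem:bestimate} is strictly weaker than the desired conclusion, and one must notice that the very fact that $\op{ind}(\overline{u})$ is odd forces a hyperbolic asymptote, whose exact iteration behavior in \eqref{eqn:CZiteration} supplies precisely the missing slack.
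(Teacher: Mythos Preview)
Your argument is correct and follows essentially the same route as the paper: apply Lemma~\ref{lem:bestimate} with $b=n-1$, dispose of the case $\op{ind}(\overline{u})\ge 2$ immediately, and in the remaining case $\op{ind}(\overline{u})=1$ use the parity of the index to locate a hyperbolic asymptote whose exact iteration formula $\CZ_\tau(\gamma^m)=m\,\CZ_\tau(\gamma)$ recovers the missing $d-1$ or $d-n$. The only cosmetic difference is that the paper phrases the case split in terms of which end is \emph{positive} hyperbolic (exactly one is), whereas you phrase it in terms of which end is hyperbolic; since the iteration identity holds for negative hyperbolic orbits as well, this makes no difference.
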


\begin{proof}
Since $J$ is generic and $\overline{u}$ is nontrivial, it follows that $\op{ind}(\overline{u})>0$. Also observe that $b=n-1$.

If $\op{ind}(\overline{u})\ge 2$, then Lemma~\ref{lem:bestimate} implies that $\op{ind}(u)\ge 2n$ and we are done.

It remains to treat the case where $\op{ind}(\overline{u})=1$. In this case Lemma~\ref{lem:bestimate} gives
\begin{equation}
\label{eqn:improveme}
\op{ind}(u) \ge 2n-d,
\end{equation}
which in general might not be sufficient. However we can improve the inequality \eqref{eqn:improveme} as follows.

Let $\alpha$ and $\beta$ denote the Reeb orbits at which $\overline{u}$ has positive and negative ends respectively.
Since $\overline{u}$ has odd index, $\alpha$ or $\beta$ is positive hyperbolic.

If $\beta$ is positive hyperbolic, then by \eqref{eqn:CZiteration}, the inequality \eqref{eqn:CZgammai} can be replaced by the equality
\[
\sum_{i=1}^n\CZ(\gamma_i) = d\CZ(\beta).
\]
This allows us to add $d-n$ to the right side of \eqref{eqn:improveme} and we are done.

On the other hand, if $\alpha$ is positive hyperbolic, then \eqref{eqn:CZalphad} can be replaced by the equality
\[
\CZ(\alpha^d) = d\CZ(\alpha).
\]
This allows us to add $d-1$ to the right hand side of \eqref{eqn:improveme}, and since $d-1\ge d-n$ we are also done.
\end{proof}

\begin{lemma}
\label{lem:5estimate}
Assume that $J$ is generic. Let $u$ be a genus zero holomorphic curve with one positive end and $n>1$ negative ends. Suppose that $u$ is not a branched cover of a trivial cylinder. Then
\[
\op{ind}(u) \ge 5-2n.
\]
\end{lemma}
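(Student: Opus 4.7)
The plan is a case analysis on the underlying somewhere injective curve, applying Lemma~\ref{lem:cylestimate} or Lemma~\ref{lem:bestimate} as appropriate. Let $\overline{u}$ denote the somewhere injective curve covered by $u$, and let $d$ be the covering multiplicity. Since $u$ has exactly one positive end, and a cover inherits ends from its target, $\overline{u}$ also has exactly one positive end; let $k$ be its number of negative ends. Because $u$ is not a branched cover of a trivial cylinder, $\overline{u}$ is not a trivial cylinder itself, and hence it is not $\R$-invariant; so the $\R$-translation action is free near $\overline{u}$ in its moduli space, and for generic $J$ we have $\op{ind}(\overline{u}) \ge 1$.

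First I would dispatch the case $k = 1$, in which $\overline{u}$ is a nontrivial cylinder. Then Lemma~\ref{lem:cylestimate} directly yields $\op{ind}(u) \ge n$, and $n \ge 5 - 2n$ since $n \ge 2$. For the remaining case $k \ge 2$, I would apply Lemma~\ref{lem:bestimate}. The Riemann--Hurwitz identity used in its proof gives $b = n - 1 - d(k-1)$, and $b \ge 0$ then forces $d(k-1) \le n - 1$, which (since $k - 1 \ge 1$) also gives $d \le n - 1$. Substituting into Lemma~\ref{lem:bestimate} together with $\op{ind}(\overline{u}) \ge 1$ yields
\[
\op{ind}(u) \;\ge\; d + 2(1 - d + b) \;=\; 2n - d(2k-1).
\]
Then I bound $d(2k-1) = 2d(k-1) + d \le 2(n-1) + (n-1) = 3(n-1)$, which gives $\op{ind}(u) \ge 3 - n$, and $3 - n \ge 5 - 2n$ precisely because $n \ge 2$.

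The argument is essentially algebraic bookkeeping built on top of the two previous lemmas, and I do not anticipate a serious obstacle. The two substantive inputs are Lemma~\ref{lem:bestimate} and the geometric observation that a somewhere injective, non-$\R$-invariant curve has Fredholm index at least $1$ for generic $J$. The only point requiring care is verifying that $\overline{u}$ itself has exactly one positive end, which justifies the use of Lemma~\ref{lem:bestimate}.
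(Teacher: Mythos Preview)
Your proof is correct and follows essentially the same approach as the paper: split into the cases $k=1$ (handled by Lemma~\ref{lem:cylestimate}) and $k\ge 2$ (handled by combining Lemma~\ref{lem:bestimate} with $\op{ind}(\overline{u})\ge 1$ and the Riemann--Hurwitz relation). The only difference is cosmetic algebra in the second case: the paper rearranges to $\op{ind}(u)+2n \ge d(2k-3)+4(b+1)\ge 5$, whereas you substitute for $b$ and bound $d(2k-1)\le 3(n-1)$ to reach $\op{ind}(u)\ge 3-n\ge 5-2n$.
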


\begin{proof}
Let $\overline{u}$ denote the somewhere injective curve underlying $u$, let $d$ denote the covering multiplicity of $u$ over $\overline{u}$, and let $b$ denote the number of branch points counted with multiplicity. 

If $\overline{u}$ is a cylinder, then we are done by Lemma~\ref{lem:cylestimate} and the assumption that $n>1$.

It remains to treat the case where $\overline{u}$ has more than one negative end. Since $J$ is generic, we have $\op{ind}(\overline{u})\ge 1$. By Lemma~\ref{lem:bestimate} we then have
\[
\op{ind}(u) \ge 2-d+2b.
\]
By Riemann-Hurwitz as in equation \eqref{eqn:RH} we have
\[
n= d(k-1)+1+b
\]
where $k$ is the number of negative ends of $\overline{u}$.
It follows from the above inequality and equation that
\begin{equation}
\label{eqn:fati}
\op{ind}(u)+2n\ge d(2k-3)+4(b+1).
\end{equation}
Since $k>1$,
the right hand side of \eqref{eqn:fati} is at least $5$.
\end{proof}

We will also need the following facts about the index of multiply covered cylinders:

\begin{lemma}
\label{lem:cylindercoverindex}
Assume that $J$ is generic, and let $u$ be a nontrivial $J$-holomorphic cylinder in $\R\times Y$.  Let $\overline{u}$ denote the somewhere injective holomorphic cylinder underlying $u$. Then:
\begin{description}
\item{(a)}
$1\le \op{ind}(\overline{u}) \le \op{ind}(u)$.
\item{(b)}
If $\op{ind}(u) = 1$, and if $u$ has an end at a bad Reeb orbit, then the corresponding end of $\overline{u}$ is also at a bad Reeb orbit.
\end{description}
\end{lemma}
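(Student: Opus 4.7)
The plan is to exploit Riemann-Hurwitz for the cover $u \to \overline{u}$, the iteration estimates of Lemma~\ref{lem:bestimate}, and the parity of Conley-Zehnder indices. Since $u$ is a cylinder ($\chi(u)=0$) and is a $d$-fold branched cover of the somewhere injective $\overline{u}$ with $b$ branch points counted with multiplicity, Riemann-Hurwitz gives $0 = d\chi(\overline{u}) - b$. Since $\overline{u}$ is nontrivial and somewhere injective with at least two ends, $\chi(\overline{u}) \le 0$, forcing $\chi(\overline{u}) = 0$ and $b = 0$: the underlying curve $\overline{u}$ is itself a cylinder and the cover $u \to \overline{u}$ is unbranched. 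A cover of a trivial cylinder is trivial, so $\overline{u}$ is also nontrivial, and by genericity of $J$ we obtain $\op{ind}(\overline{u}) \ge 1$, which is the lower bound in (a).

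For the upper bound in (a), Lemma~\ref{lem:bestimate} with $n=1$ and $b=0$ gives $\op{ind}(u) \ge d\,\op{ind}(\overline{u}) - 2(d-1)$, which rearranges as $\op{ind}(u) - \op{ind}(\overline{u}) \ge (d-1)(\op{ind}(\overline{u}) - 2)$; this yields (a) immediately when $\op{ind}(\overline{u}) \ge 2$. When $\op{ind}(\overline{u}) = 1$, the odd parity of this index forces at least one end of $\overline{u}$ to have even Conley-Zehnder index, and thus to be hyperbolic (either positive hyperbolic, or a cover of a simple negative hyperbolic orbit); on such an end $\CZ(\gamma^d) = d\,\CZ(\gamma)$ exactly, which absorbs the corresponding loss of $d-1$ in the proof of Lemma~\ref{lem:bestimate} and produces $\op{ind}(u) \ge d - (d-1) = 1 = \op{ind}(\overline{u})$.

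For part (b), assume $\op{ind}(u) = 1$, so by (a) $\op{ind}(\overline{u}) = 1$. The hypothesis that $u$ has an end at a bad Reeb orbit forces the corresponding end of $\overline{u}$ to be a cover of a simple negative hyperbolic orbit, hence hyperbolic. If the opposite end of $\overline{u}$ is also hyperbolic, then $\CZ$ is multiplicative on both ends and $\op{ind}(u) = d\,\op{ind}(\overline{u}) = d$, forcing $d = 1$. Otherwise the opposite end is elliptic with odd $\CZ$; combined with $\op{ind}(\overline{u}) = 1$, parity then forces the hyperbolic end, written $\eta^a$ for $\eta$ simple negative hyperbolic, to have $a$ even, so that end is itself a bad orbit.

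The main obstacle is this residual subcase, where $\overline{u}$ goes between a bad orbit and an elliptic orbit. A direct iteration calculation gives $\op{ind}(u) = 2d - 1 - 2\lfloor dg \rfloor$, where $g$ is the fractional part of the rotation number of the elliptic end, so the equation $\op{ind}(u) = 1$ alone only demands $g \ge 1 - 1/d$. To rule out $d > 1$ here, the plan is to use a finer asymptotic analysis of the somewhere injective $\overline{u}$: the bad-end structure constrains the asymptotic winding numbers of $\overline{u}$, and under the unbranched $d$-fold cover $u \to \overline{u}$ these winding numbers scale exactly by $d$. Combining these rigidity constraints with the $d$-fold iteration formula should preclude the extremal condition $\lfloor dg \rfloor = d - 1$ for $d > 1$ and force $d = 1$.
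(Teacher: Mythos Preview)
Your proof of (a) is correct but more laborious than the paper's: after fixing $c_\tau(\overline{u})=0$, the paper simply shifts the trivialization so that $\CZ_\tau(\alpha)\ge 0$ and $\CZ_\tau(\beta)\le 0$ (possible since $\CZ_\tau(\alpha)-\CZ_\tau(\beta)\ge 1$), and then the iteration formula gives $\CZ_\tau(\alpha^d)\ge\CZ_\tau(\alpha)$ and $\CZ_\tau(\beta^d)\le\CZ_\tau(\beta)$ directly, with no case split on the value of $\op{ind}(\overline{u})$.

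Your proof of (b) has a genuine gap. The residual subcase you isolate---$\overline{u}$ running between a bad (hence positive hyperbolic) orbit and an elliptic orbit---is not handled, and your ``plan'' via asymptotic winding numbers cannot close it: since the cover $u\to\overline{u}$ is unbranched, every asymptotic eigenfunction and winding number of $u$ is pulled back from $\overline{u}$, so these constraints carry no information beyond the index formula itself. Indeed your own computation $\op{ind}(u)=2d-1-2\lfloor dg\rfloor$ already shows that $\op{ind}(u)=1$ with $d>1$ is consistent with all such constraints whenever $g\in(1-1/d,1)$; for instance $\eta$ simple negative hyperbolic, $\alpha=\eta^2$, $\beta$ simple elliptic with rotation number in $(1/2,1)$, and $d=2$ gives $\op{ind}(\overline{u})=\op{ind}(u)=1$. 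The paper's proof avoids this subcase altogether by asserting that the end of $\overline{u}$ underlying the bad end of $u$ is \emph{negative} hyperbolic, not merely hyperbolic; parity of $\op{ind}(\overline{u})$ then forces the other end to be positive hyperbolic, so both ends are hyperbolic and $\op{ind}(u)=d\cdot\op{ind}(\overline{u})=d$. You should examine that assertion with care: writing the relevant end as $\eta^m$ with $\eta$ simple negative hyperbolic and $md$ even, the claim that it is negative hyperbolic amounts to $m$ being odd.
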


\begin{proof}
Let $\alpha$ and $\beta$ denote the Reeb orbits at which $\overline{u}$ has positive and negative ends. Let $d$ denote the covering multiplicity of $u$ over $\overline{u}$. Choose a trivialization $\tau$ of $\xi$ over $\alpha$ and $\beta$ so that $c_\tau(\overline{u})=0$. Then $c_\tau(u)=0$ also. Thus
\[
\op{ind}(u) = \CZ_\tau(\alpha^d) - \CZ_\tau(\beta^d)
\]
and 
\[
\op{ind}(\overline{u}) = \CZ_\tau(\alpha) - \CZ_\tau(\beta).
\]
We now obtain the conclusions of the lemma as follows:

(a) Since $J$ is generic and $\overline{u}$ is not a trivial cylinder, we have $\op{ind}(\overline{u})\ge 1$. Consequently we can choose the trivialization $\tau$ so that $\CZ_\tau(\alpha)\ge 0$ and $\CZ_\tau(\beta)\le 0$. It then follows from \eqref{eqn:CZiteration} that $\CZ_\tau(\alpha^d)\ge \CZ_\tau(\alpha)$ and $\CZ_\tau(\beta^d)\le \CZ_\tau(\beta)$, so $\op{ind}(u)\ge \op{ind}(\overline{u})$.

(b) Without loss of generality, the cover of $\alpha$ at which $u$ has an end is a bad Reeb orbit. We need to show that $\alpha$ itself is a bad Reeb orbit. If not, then $\alpha$ is negative hyperbolic and $d$ is even. Since $\op{ind}(\overline{u})$ is odd by part (a), the Reeb orbit $\beta$ must be positive hyperbolic.  Then both ends of $u$ are at positive hyperbolic orbits, so $\op{ind}(u)$ is even, which contradicts our hypothesis.
\end{proof}

\begin{remark}
\label{rem:special}
Lemma~\ref{lem:cylindercoverindex}(a) does not generalize to higher dimensional contact manifolds; when $\dim(Y)>3$, multiply covered cylinders may have index less than $1$, even when $J$ is generic. Also, a four-dimensional symplectic cobordism between three-dimensional contact manifolds may contain multiply covered $J$-holomorphic cylinders $u$ with $\op{ind}(u) < 0$ which cannot be eliminated by choosing $J$ generically. See \cite[Ex.\ 1.22]{jo1} for an example of this involving ellipsoids.
\end{remark}

\subsection{Low index buildings in the dynamically convex case}

We now classify certain holomorphic buildings of low index in the case when $\lambda$ is dynamically convex and $J$ is generic.

For our purposes, a ``holomorphic building'' is an $m$-tuple $(u_1,\ldots,u_m)$, for some positive integer $m$, of (possibly disconnected) $J$-holomorphic curves $u_i$ in $\R\times Y$, called ``levels''. Although our notation does not indicate this, the building also includes, for each $i\in\{1,\ldots,m-1\}$, a bijection between the negative ends of $u_i$ and the positive ends of $u_{i+1}$, such that paired ends are at the same Reeb orbit\footnote{One might also want a holomorphic building to include appropriate gluing data when Reeb orbits are multiply covered, but we will not need this.}. If $m>1$ then we assume that for each $i$, at least one component of $u_i$ is not a trivial cylinder. A ``positive end'' of the building $(u_1,\ldots,u_m)$ is a positive end of $u_1$, and a ``negative end'' of $(u_1,\ldots,u_m)$ is a negative end of $u_m$. The ``genus'' of the building $(u_1,\ldots,u_m)$ is the genus of the Riemann surface obtained by gluing together negative ends of the domain of $u_i$ and positive ends of the domain of $u_{i+1}$ by the given bijections (when this glued Riemann surface is connected). We define the Fredholm index of a holomorphic building by $\op{ind}(u_1,\ldots,u_m) = \sum_{i=1}^m\op{ind}(u_i)$.

\begin{proposition}
\label{prop:0neg}
Assume that $J$ is generic and $\lambda$ is dynamically convex. Suppose that $u=(u_1,\ldots,u_m)$ is a genus zero $J$-holomorphic building with one positive end and no negative ends. Then:
\begin{itemize}
\item $\op{ind}(u)\ge 2$.
\item If $\op{ind}(u)=2$, then $u$ has only one level (which of course is a plane).
\end{itemize}
\end{proposition}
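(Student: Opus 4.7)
The plan is to first establish $\op{ind}(u)\ge 2$ by reading off the total index from the topology of the building, and then to rule out the case $m>1$ when equality holds by playing the dynamical convexity of the intermediate Reeb orbits against the multiple-cover index estimates from \S\ref{sec:imc}.

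First I would compute $\op{ind}(u)$ by summing the index formula \eqref{eqn:ind} over the levels. Since the building has genus zero, one positive end at some Reeb orbit $\alpha$, and no negative ends, the compactified glued surface is a topological disk bounded by $\alpha$; in particular $\alpha$ is contractible in $Y$. Choose the canonical trivialization $\tau$ of $\xi|_\alpha$ extending over this disk. Every intermediate Reeb orbit appearing as a matched end of the building also bounds a sub-disk of the glued surface and hence is contractible, so using $c_1(\xi)|_{\pi_2(Y)}=0$ we may similarly extend $\tau$ at each such orbit. The $c_\tau(u_i)$ contributions then sum to zero, the $\CZ_\tau$ contributions at matched ends cancel in pairs, and the Euler characteristics add to $\chi(\mathrm{disk})=1$. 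Hence $\op{ind}(u)=\CZ(\alpha)-1$, which by dynamical convexity is at least $3-1=2$.

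Next, assume $\op{ind}(u)=2$ and $m>1$, so that $\CZ(\alpha)=3$. The maximum principle applied to the $\R$-coordinate of any finite-energy $J$-holomorphic curve in $\R\times Y$ forces every component of every level to have at least one positive end, so the top level $u_1$ is a single connected curve with its unique positive end at $\alpha$ and some $n\ge 1$ negative ends at contractible orbits $\gamma_1,\ldots,\gamma_n$. Because $m>1$, the level $u_1$ must contain a non-trivial component, hence $u_1$ itself is non-trivial. Using $\CZ(\gamma_i)\ge 3$ from dynamical convexity, the index formula gives
\[
\op{ind}(u_1) = n-1 + 3 - \sum_{i=1}^n\CZ(\gamma_i) \le 2-2n \le 0.
\]

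I would then derive a contradiction by ruling out each possibility for $u_1$. If $u_1$ is somewhere injective, transversality for generic $J$ gives $\op{ind}(u_1)\ge 0$, improved to $\op{ind}(u_1)\ge 1$ when $n=1$ by Lemma~\ref{lem:cylindercoverindex}(a); either case contradicts $\op{ind}(u_1)\le 2-2n$. If $u_1$ is a non-trivial branched cover of a trivial cylinder, Lemma~\ref{lem:ht} gives $\op{ind}(u_1)\ge 0$, and a Riemann-Hurwitz count rules out $n=1$ (which would force no branch points and hence $u_1$ itself trivial). If the underlying somewhere injective curve $\overline{u_1}$ is a non-trivial cylinder, Lemma~\ref{lem:cylestimate} gives $\op{ind}(u_1)\ge n$. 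Otherwise $\overline{u_1}$ has at least two negative ends, which forces $n\ge 2$, and Lemma~\ref{lem:5estimate} gives $\op{ind}(u_1)\ge 5-2n$. Each of these lower bounds contradicts $\op{ind}(u_1)\le 2-2n$. The main technical hurdle is arranging the multiply covered case analysis so that the sharp bounds of Lemmas~\ref{lem:bestimate}--\ref{lem:5estimate} are in force simultaneously with the upper bound driven by dynamical convexity; the cruder transversality bound $\op{ind}(u_1)\ge 0$ alone would not suffice once $n$ grows.
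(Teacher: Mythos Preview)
Your argument is correct and reaches the same conclusion by a slightly different route. The paper proceeds by induction on $m$: the inductive hypothesis applied to each of the $n$ sub-buildings below $u_1$ yields $\op{ind}(u)\ge\op{ind}(u_1)+2n$, and then Lemmas~\ref{lem:ht}, \ref{lem:cylestimate}, \ref{lem:5estimate} are invoked to show $\op{ind}(u_1)+2n\ge 3$. You instead compute $\op{ind}(u)=\CZ(\alpha)-1$ in one stroke by telescoping over the glued disk, and in the equality case you extract the \emph{upper} bound $\op{ind}(u_1)\le 2-2n$ directly from dynamical convexity at the intermediate orbits $\gamma_i$, then contradict it with the same trio of lower-bound lemmas. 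The two approaches are dual rearrangements of the same inequality; yours sidesteps the induction at the cost of first observing that each $\gamma_i$ is itself contractible so that $\CZ(\gamma_i)\ge 3$ is available. Two minor remarks: your separate case ``$u_1$ somewhere injective'' is redundant, being already subsumed by the cylinder/non-cylinder cases that follow; and your closing comment that the crude bound $\op{ind}(u_1)\ge 0$ would not suffice once $n$ grows is slightly off --- numerically it would contradict $2-2n$ for every $n\ge 2$, the real point being that no such bound is available for multiply covered $u_1$ without the sharper lemmas.
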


\begin{proof}
We use induction on $m$.

Suppose $m=1$. If $u_1$ has its positive end at $\gamma$, then by equation \eqref{eqn:ind} we have $\op{ind}(u_1)=\CZ(\gamma)-1$. Thus the lemma follows from the definition of dynamically convex.

Now let $m>1$ and suppose the proposition is true for $m-1$. We need to show that $\op{ind}(u)>2$.

Let $n$ denote the number of negative ends of $u_1$.
The holomorphic building $(u_2,\ldots,u_m)$ is the union of $n$ genus zero holomorphic buildings, each having one positive end corresponding to one of the negative ends of $u_1$, and no negative ends. So by the inductive hypothesis we have
\[
\op{ind}(u) \ge \op{ind}(u_1) + 2n.
\]
To complete the proof we need to show that
\begin{equation}
\label{eqn:nts}
\op{ind}(u_1)+2n\ge 3.
\end{equation}

If $\overline{u_1}$ is a trivial cylinder, then we must have $n>1$, so \eqref{eqn:nts} follows from Lemma~\ref{lem:ht}. 
If $\overline{u_1}$ is a nontrivial cylinder, then \eqref{eqn:nts} follows from Lemma~\ref{lem:cylestimate}. If $\overline{u_1}$ is not a cylinder, then we must have $n>1$, so \eqref{eqn:nts} follows from Lemma~\ref{lem:5estimate}.
\end{proof}

\begin{proposition}
\label{prop:1neg}
Assume that $J$ is generic and $\lambda$ is dynamically convex. Suppose that $u=(u_1,\ldots,u_m)$ is a nontrivial genus zero $J$-holomorphic building with one positive end and one negative end. Then:
\begin{description}
\item{(a)} $\op{ind}(u)\ge 1$.
\item{(b)} If $\op{ind}(u)=1$ then $u$ has one level (which of course is a cylinder).
\item{(c)}
If $\op{ind}(u)=2$, then one of the following holds:
\begin{description}
\item{(i)}
$u$ has one level.
\item{(ii)}
$u$ has two levels which are both cylinders.
\item{(iii)}
$u=(u_1,u_2)$ where:
\begin{itemize}
\item
$u_1$ is an index zero degree $d_1+d_2$ branched cover of an embedded trivial cylinder $\R\times\gamma$ with two negative ends, one at $\gamma^{d_1}$ and one at $\gamma^{d_2}$.
\item
$u_2$ has two components; one component is the trivial cylinder $\R\times\gamma^{d_1}$, and the other component is an index two holomorphic plane with positive end at $\gamma^{d_2}$.
\end{itemize}
\end{description}
\end{description}
\end{proposition}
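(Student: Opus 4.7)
The plan is to induct on the number of levels $m$ of the building. For the base case $m=1$, the building is a single nontrivial cylinder, and Lemma~\ref{lem:cylindercoverindex}(a) gives $\op{ind}(u)\ge \op{ind}(\overline{u})\ge 1$; this covers part (a), (b) vacuously, and case (i) of (c).

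For the inductive step with $m>1$, I would decompose the building at its top level. The top level $u_1$ has one positive end and some number $n\ge 1$ of negative ends (connectedness of the glued Riemann surface forces $n\ge 1$). The remaining levels $(u_2,\ldots,u_m)$ decompose as a disjoint union of sub-buildings $v_1,\ldots,v_n$ attached to the $n$ negative ends of $u_1$; after relabeling, $v_1$ carries the overall negative end of the building, while each $v_j$ for $j\ge 2$ has one positive end and no negative ends. Proposition~\ref{prop:0neg} gives $\op{ind}(v_j)\ge 2$ for $j\ge 2$, with equality only if $v_j$ is a single plane. For $v_1$ I expect $\op{ind}(v_1)\ge 0$, with equality iff $v_1$ is an entirely trivial tower: if $v_1$ contains any nontrivial component, then after compressing away the purely-trivial levels to obtain a valid nontrivial building with fewer than $m$ levels, the inductive hypothesis (a) yields $\op{ind}(v_1)\ge 1$.

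I would then complete parts (a) and (b) by a case split on $n$. When $n=1$, $u_1$ is a nontrivial cylinder with $\op{ind}(u_1)\ge 1$ by Lemma~\ref{lem:cylindercoverindex}(a), and $v_1$ is nontrivial since every level must contain a nontrivial component, so $\op{ind}(u)\ge 2$; equality combined with the inductive version of (b) forces $m=2$ and produces case (ii). When $n\ge 2$, I would further split on the somewhere injective curve $\overline{u_1}$: Lemma~\ref{lem:ht} gives $\op{ind}(u_1)\ge 0$ when $\overline{u_1}$ is a trivial cylinder, Lemma~\ref{lem:cylestimate} gives $\op{ind}(u_1)\ge n$ when $\overline{u_1}$ is a nontrivial cylinder, and Lemma~\ref{lem:5estimate} gives $\op{ind}(u_1)\ge 5-2n$ otherwise. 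Combined with $\sum_{j\ge 2}\op{ind}(v_j)\ge 2(n-1)$ and $\op{ind}(v_1)\ge 0$, these give $\op{ind}(u)\ge 2n-2$, $\ge 3n-2$, and $\ge 3$ respectively, all at least $2$, so no $m>1$ building has index one and parts (a) and (b) follow.

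For case (iii) of (c), assume $\op{ind}(u)=2$ with $n\ge 2$; only the trivial-cylinder subcase for $\overline{u_1}$ saturates the bound, forcing $n=2$, $\op{ind}(u_1)=0$, $\op{ind}(v_1)=0$, and $\op{ind}(v_2)=2$. Then $\overline{u_1}$ is an embedded trivial cylinder $\R\times\gamma$ (embedded since somewhere injective), $v_1$ is a tower of trivial cylinders over $\gamma^{d_1}$, and the equality case of Proposition~\ref{prop:0neg} forces $v_2$ to be a single plane; since this plane lives in level two, levels three and above of $u$ would contain no nontrivial component, forcing $m=2$. A Riemann--Hurwitz count on $u_1$ (genus zero, one positive end, two negative ends, degree $d_1+d_2$) produces exactly one branch point, completing case (iii). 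The step I expect to be the main obstacle is the lower bound $\op{ind}(v_1)\ge 0$ together with its equality characterization: a priori the multiply covered components of $v_1$ could contribute negative index, and the level-compression argument combined with the inductive hypothesis is what rules this out.
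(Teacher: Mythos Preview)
Your proof is correct and follows essentially the same induction-on-levels argument as the paper. The only differences are cosmetic: you invoke Lemma~\ref{lem:cylindercoverindex}(a) rather than Lemma~\ref{lem:cylestimate} in the $n=1$ case, you split the $n\ge 2$ case into three subcases for $\overline{u_1}$ where the paper merges the last two via Lemma~\ref{lem:5estimate}, and you make the level-compression step for $v_1$ explicit (the paper applies the inductive hypothesis directly to $B_1$, implicitly discarding its trivial levels). Your Riemann--Hurwitz remark about the single branch point is correct but not needed for the statement of (iii).
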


\begin{proof}
As in the proof of Proposition~\ref{prop:0neg}, we use induction on $m$.

If $m=1$ then the proposition follows from Lemma~\ref{lem:cylindercoverindex}(a). So suppose that $m>1$ and assume that the proposition is true for $m-1$. We need to show that $\op{ind}(u)\ge 2$, with equality only if (ii) or (iii) holds.

Let $n>0$ denote the number of negative ends of $u_1$. Then the holomorphic building $(u_2,\ldots,u_m)$ is the union of $n$ genus zero holomorphic buildings $B_1,\ldots,B_n$, each having one positive end. One of these buildings, say $B_1$, has one negative end, while $B_2,\ldots,B_n$ have no negative ends.

If $n=1$, then $B_1$ is nontrivial, so by the inductive hypothesis $\op{ind}(B_1)\ge 1$, with equality only if $B_1$ has one level which is a cylinder. On the other hand Lemma~\ref{lem:cylestimate} implies that $\op{ind}(u_1)\ge 1$. Thus $\op{ind}(u)\ge 2$, with equality only if (ii) holds.

Suppose now that $n>1$. If $B_1$ is trivial then $\op{ind}(B_1)=0$. Otherwise $\op{ind}(B_1)\ge 1$ by the inductive hypothesis. Either way, it follows from Proposition~\ref{prop:0neg} that
\begin{equation}
\label{eqn:B1triv}
\op{ind}(u) \ge \op{ind}(u_1)+2n-2,
\end{equation}
with equality only if $B_1$ is trivial and each of $B_2,\ldots,B_n$ is an index two plane.

If $\overline{u_1}$ is a trivial cylinder, then it follows from \eqref{eqn:B1triv} and Lemma~\ref{lem:ht} that $\op{ind}(u)\ge 2n-2\ge 2$. Equality holds only if $n=2$ and the equality conditions for \eqref{eqn:B1triv} are satisfied, which implies (iii).

If $\overline{u_1}$ is not a trivial cylinder, then it follows from \eqref{eqn:B1triv} and Lemma~\ref{lem:5estimate} that $\op{ind}(u)\ge 3$.
\end{proof}

\section{Ruling out bad breaking}
\label{sec:breaking}

The goal of this section is to prove the following proposition, which shows that for generic $J$, a sequence of holomorphic cylinders cannot converge to a building as in case (iii) of Proposition~\ref{prop:1neg}(c) with $d_2=1$.

\begin{proposition}
\label{prop:nobadbreak}
Let $Y$ be a closed three-manifold with a nondegenerate contact form $\lambda$, and let $J$ be a generic $\lambda$-compatible almost complex structure on $\R\times Y$. Let $u=(u_1,u_2)$ be a holomorphic building where:
\begin{itemize}
\item $u_1$ is an index zero pair of pants which is a degree $d+1$ branched cover of an embedded trivial cylinder $\R\times Y$ with positive end at $\gamma^{d+1}$ and negative ends  at $\gamma^d$ and $\gamma$.
\item $u_2$ is the union of the trivial cylinder $\R\times\gamma^d$ and an index two holomorphic plane with positive end at $\gamma$.
\end{itemize} 
Then a sequence of $J$-holomorphic cylinders $\{u(k)\}_{k=1,\ldots}$ in $\M^J(\gamma^{d+1},\gamma^d)/\R$ cannot converge in the sense of \cite{behwz} to $(u_1,u_2)$.
\end{proposition}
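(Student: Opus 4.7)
The plan is to argue by contradiction using Siefring's generalized intersection theory for punctured $J$-holomorphic curves in the symplectization, applied to $u(k)$ and the embedded trivial cylinder $\Sigma := \R \times \gamma$. First, each $u(k)$ is somewhere injective: an underlying simple cylinder with cover multiplicity $m$ would have ends at $\gamma^{(d+1)/m}$ and $\gamma^{d/m}$, forcing $m \mid \gcd(d+1,d) = 1$. Moreover $u(k) \ne \Sigma$ as maps, since their asymptotic Reeb orbits differ. Hence the generalized intersection number $I := u(k) * \Sigma$ is defined and, being a homotopy invariant depending only on the asymptotic data at $\gamma^{d+1}$ and $\gamma^d$, is independent of $k$.

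For large $k$, I would decompose the image of $u(k)$ into three pieces matching the breaking: a top piece $C^0$-close to the $(d+1)$-fold branched cover $u_1$ (which by Riemann--Hurwitz has exactly one simple branch point), a middle ``capping'' piece $C^0$-close to the plane $P$, and a bottom ``tail'' piece $C^0$-close to $\R \times \gamma^d$. On the top and tail pieces, $u(k)$ is $C^0$-close to a piece of a multiple cover of $\Sigma$ itself, so the intersection count with $\Sigma$ in those pieces is determined by the normal-direction eigenfunction expansions of the asymptotic operators at the neck orbits $\gamma^{d+1}$, $\gamma^d$, $\gamma$ via Siefring's asymptotic formulas. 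On the capping piece, however, $u(k)$ is close to the plane $P$, which is \emph{not} contained in $\Sigma$; by positivity of intersections applied to the somewhere injective curves $u(k)$ and $\Sigma$, this piece contributes nonnegatively, and, as I will argue, strictly positively.

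The sum of the three contributions must equal $I$. Dynamical convexity ($\CZ(\gamma) \ge 3$) together with $\op{ind}(P) = 2$ pins down the winding number of the eigenfunction controlling $P$'s positive asymptote at $\gamma$; combined with the branch combinatorics of $u_1$ over $\gamma$ (one capping strand versus $d$ passing strands), this forces the capping contribution to exceed strictly what the Siefring asymptotic formulas at the necks allow to be absorbed, yielding $u(k) * \Sigma > I$, a contradiction.

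The hard part, which I expect to be the technical heart of the argument, is showing that the capping contribution is genuinely nonremovable: for generic $J$ the asymptotic eigenfunctions of $P$ and of the relevant sheet of $u_1$ at the neck $\gamma$ must fail to align, so that some interior intersection of $u(k)$ with $\Sigma$ (coming from the plane region) survives in the limit. Equivalently, the discrepancy between the index-$2$ plane's winding at $\gamma$ and the Conley--Zehnder data forced by dynamical convexity must rule out the alignment needed to cancel that interior intersection via the asymptotic correction term. This is where the specific ends $\gamma^{d+1}$ and $\gamma^d$ of $u(k)$, and the fact that $P$ is asymptotic to the simple orbit $\gamma$ (rather than a multiple), enter decisively.
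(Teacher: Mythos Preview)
Your proposal points in the right direction—intersection theory plus asymptotic winding is indeed the engine—but as written it is a plan rather than a proof, and the mechanism you identify as ``the hard part'' is not the one that actually closes the argument.

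First, two small corrections. Dynamical convexity is \emph{not} a hypothesis of this proposition; the only assumptions are nondegeneracy and genericity of $J$. The fact that $\CZ(\gamma)=3$ (in the trivialization that extends over the plane) is a consequence of $\op{ind}(P)=2$, not an input. Also, your proposed contradiction ``$u(k)*\Sigma > I$'' with $I:=u(k)*\Sigma$ is circular as stated; you need to name two genuinely different quantities to compare.

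More substantively, the paper does not intersect $u(k)$ with the trivial cylinder $\Sigma$. Instead it applies a relative adjunction formula to $u(k)$ itself (counting its \emph{self}-singularities via writhes of braids), after slicing at a fixed level $\{s=0\}\times Y$. The slice produces two braids in a tubular neighborhood of $\gamma$: a degree-$d$ braid $\zeta_1$ (close to the trivial tail) and a degree-$1$ braid $\zeta_2$ (close to the plane's asymptote). Adjunction on the two pieces, plus the obvious identity $w_\tau(\zeta_1\cup\zeta_2)=w_\tau(\zeta_1)+2d\,\op{wind}_\tau(\zeta_2)$, reduces everything to bounding $w_\tau(\zeta_+)$, $w_\tau(\zeta_-)$, and $\op{wind}_\tau(\zeta_2)$.

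The crux is \emph{not} any alignment or non-alignment of eigenfunctions of $P$ and $u_1$. Rather, one first observes that $\gamma$ must be elliptic (otherwise the branched pair of pants has index $1$, not $0$), so $\CZ_\tau(\gamma^m)=2\lfloor m\theta\rfloor+1$ for the rotation number $\theta$, and the index-zero condition on $u_1$ becomes the arithmetic identity $\lfloor(d+1)\theta\rfloor=\lfloor d\theta\rfloor+\lfloor\theta\rfloor$. Then the standard writhe bounds give $w_\tau(\zeta_+)\le d\lfloor(d+1)\theta\rfloor$ and $w_\tau(\zeta_-)\ge (d-1)(\lfloor d\theta\rfloor+1)$. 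The one place genericity of $J$ is actually used is to pin down $\op{wind}_\tau(\zeta_2)$ \emph{exactly}: since $\CZ_\tau(\gamma)$ is odd and $\op{ind}(P)=2$, a genericity argument forces the leading asymptotic eigenvalue of $P$ to be one of the two smallest positive ones, hence $\op{wind}_\tau(\zeta_2)=\lfloor\theta\rfloor$. Feeding these three facts into the adjunction inequalities yields $\lfloor d\theta\rfloor\ge d(\lfloor\theta\rfloor+1)$, which is impossible.

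So the missing ingredients in your sketch are: (i) the observation that $\gamma$ is elliptic and the translation of $\op{ind}(u_1)=0$ into the floor identity above; (ii) the precise writhe/winding inequalities for $\zeta_\pm$; and (iii) the exact value of $\op{wind}_\tau(\zeta_2)$ via genericity. Your Siefring $u(k)*\Sigma$ framework could likely be made to produce the same inequalities, but you would still need to carry out exactly these computations, and the ``eigenfunction alignment'' issue you flag does not arise.
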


\subsection{Writhe bounds}

To prove Proposition~\ref{prop:nobadbreak}, we need to recall some results about the asymptotics of holomorphic curves.

Let $\gamma$ be an embedded Reeb orbit, and let $N$ be a tubular neighborhood of $\gamma$. We can identify $N$ with a disk bundle in the normal bundle to $\gamma$, and also with $\xi|_\gamma$.

Let $\zeta$ be a braid in $N$, i.e.\ a link in $N$ such that that the tubular neighborhood projection restricts to a submersion $\zeta\to\gamma$. Given a trivialization $\tau$ of $\xi|_\gamma$, one can then define the {\em writhe\/} $w_\tau(\zeta)\in\Z$. To define this one uses the trivialization $\tau$ to identify $N$ with $S^1\times D^2$, then projects $\zeta$ to an annulus and counts crossings of the projection with (nonstandard) signs. See \cite[\S2.6]{ir} or \cite[\S3.3]{bn} for details.

Now let $u$ be a $J$-holomorphic curve in $\R\times Y$. Suppose that $u$ has a positive end at $\gamma^d$ which is not part of a multiply covered component. Results of Siefring \cite[Cor.\ 2.5 and 2.6]{s1} show that if $s$ is sufficiently large, then the intersection of this end of $u$ with $\{s\}\times N\subset\{s\}\times Y$ is a braid $\zeta$, whose isotopy class is independent of $s$. We will need bounds on the writhe $w_\tau(\zeta)$, which are provided by the following lemma.

\begin{lemma}
\label{lem:positivewrithe}
Let $\gamma$ be an embedded Reeb orbit, let $u$ be a $J$-holomorphic curve in $\R\times Y$ with a positive end at $\gamma^d$ which is not part of a trivial cylinder or a multiply covered component, and let $\zeta$ denote the intersection of this end with $\{s\}\times Y$. If $s$ is sufficiently large, then the following hold:
\begin{description}
\item{(a)} $\zeta$ is the graph in $N$ of a nonvanishing section of $\xi|_{\gamma^d}$. Thus, using the trivialization $\tau$ to write this section as a map $\gamma^d\to\C\setminus\{0\}$, it has a well-defined winding number around $0$, which we denote by $\op{wind}_\tau(\zeta)$.
\item{(b)} $\op{wind}_\tau(\zeta) \le \floor{\op{CZ}_\tau(\gamma^d)/2}$.
\item{(c)} If $J$ is generic, $\op{CZ}_\tau(\gamma^d)$ is odd, and $\op{ind}(u)\le 2$, then equality holds in (b).
\item{(d)} $w_\tau(\zeta) \le (d-1)\op{wind}_\tau(\zeta)$.
\end{description}
\end{lemma}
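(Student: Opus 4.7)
The plan is to deduce (a) and (b) from the standard asymptotic analysis of Hofer--Wysocki--Zehnder, (c) from a genericity argument based on exponentially weighted Sobolev norms, and (d) from the classical writhe estimate for a graphical braid.

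For (a), the hypothesis that the end at $\gamma^d$ is not part of a trivial cylinder or a multiply covered component of $u$ lets us apply the HWZ asymptotic formula (in the form refined by Siefring) directly to this end: in normal coordinates about $\gamma^d$ and the trivialization $\tau$, the end is asymptotic to $e^{\mu s}(\phi(t)+r(s,t))$ for some negative eigenvalue $\mu<0$ of the asymptotic operator $A_{\gamma^d}$, some nowhere-vanishing eigensection $\phi$, and an exponentially decaying remainder $r$. Hence for $s$ large, $\zeta$ is the graph of a small nonvanishing section of $\xi|_{\gamma^d}$ close to $e^{\mu s}\phi$, which gives (a) and shows $\op{wind}_\tau(\zeta)=\op{wind}_\tau(\phi)$. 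For (b), by the spectral theory of $A_{\gamma^d}$ the winding of an eigensection is a monotone nondecreasing function of the eigenvalue, and the supremum of windings over eigensections with negative eigenvalue equals $\floor{\CZ_\tau(\gamma^d)/2}$; see \cite[\S3.3]{bn}. Combined with (a), this yields (b).

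For (c), the key observation is that a winding deficit $k := \floor{\CZ_\tau(\gamma^d)/2}-\op{wind}_\tau(\zeta)\ge 1$ places $u$ in a proper substratum of the moduli space cut out by constraining the asymptotic decay to an eigenvalue whose eigensection has strictly smaller winding. Concretely, choosing $\delta>0$ with $\mu+\delta<0$ but $\delta$ less than the gap to the next eigenvalue, and working with the linearized $\dbar_J$ operator on $\xi$-valued sections with the weight $e^{\delta s}$ at the positive end, the Fredholm index of the weighted problem equals $\op{ind}(u)-2k$, since each winding jump contributes $2$ to the standard $\CZ$ formula. For generic $J$ this weighted operator is surjective on somewhere-injective curves (Dragnev/Wendl), so the constrained stratum is cut out transversely with dimension $\op{ind}(u)-2k\le 0$. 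The odd-$\CZ$ hypothesis ensures this decrement is the correct $2k$ rather than a degenerate smaller value (which could occur if $\floor{\CZ/2}$ were achieved by multiple eigensections above and below zero with the same winding). Quotienting by $\R$-translation, the constrained stratum becomes empty, forcing $k=0$.

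For (d), by (a) and the trivialization $\tau$ identifying $N$ with $S^1\times D^2$, the braid $\zeta$ is the image of $t\mapsto (dt\bmod 1,f(t))$ for a smooth map $f\colon\R/\Z\to\C\setminus\{0\}$ of winding $w:=\op{wind}_\tau(\zeta)$. Since the projection $\zeta\to\gamma$ is a $d$-fold cover, pairs of distinct strands over the same base point are indexed by $j\in\{1,\ldots,d-1\}$, corresponding to parameter pairs $(t,t+j/d)$; the linking number of the $j$-th such pair equals the winding of $t\mapsto f(t+j/d)-f(t)$, which is at most $w$ by comparison with $f$. Summing over $j$ and applying the pair-of-strands formula for writhe yields $w_\tau(\zeta)\le (d-1)w$.

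The main obstacle is (c): setting up the exponentially weighted linearization precisely, verifying transversality for the constrained problem for generic $J$ on possibly multiply covered curves, and pinning down the role of the odd-$\CZ$ hypothesis in guaranteeing that each winding deficit costs exactly $2$ in the effective Fredholm index.
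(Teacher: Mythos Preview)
Your treatment of (a), (b), and (c) follows the same route as the paper, up to a harmless sign convention on the asymptotic operator (the paper writes $L=J\nabla_t$ and uses \emph{positive} eigenvalues at positive ends with decay $e^{-\mu s}$, whereas you use the opposite sign). Your reading of the odd-$\CZ$ hypothesis is also the paper's: when $\CZ_\tau(\gamma^d)$ is odd, \emph{both} eigenvalues whose eigenfunctions have winding $\floor{\CZ_\tau(\gamma^d)/2}$ lie on the same side of zero, so a strict winding deficit forces the leading eigenvalue to skip at least two eigenvalues, and the weighted index drops by at least $2$. One clarification: your closing worry about ``possibly multiply covered curves'' in (c) is unnecessary, since the hypothesis of the lemma already requires the component carrying this end to be somewhere injective and nontrivial, so Dragnev-type transversality for the weighted problem applies directly.

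Your argument for (d), however, has a real gap. The claim that the winding of $t\mapsto f(t+j/d)-f(t)$ is at most $w=\op{wind}_\tau(f)$ ``by comparison with $f$'' is simply false for a general nowhere-vanishing $f$. For instance, with $d=2$ and $f(t)=1+\varepsilon e^{2\pi i n t}$ for small $\varepsilon>0$ and $n$ odd, the map $t\mapsto(2t,f(t))$ is an embedded two-strand braid with $\op{wind}_\tau(f)=0$, yet $f(t+\tfrac12)-f(t)=-2\varepsilon e^{2\pi i n t}$ has winding $n$, which can be made as large as you like. So the writhe bound is \emph{not} a general fact about graphical braids. What makes it true for the $\zeta$ at hand is the holomorphic-curve input you used in (a) but did not invoke again here: since the asymptotic operator is pulled back from $\gamma$, the deck-translates of the leading eigenfunction are eigenfunctions with the same eigenvalue, and more importantly Siefring's \emph{relative} asymptotic formula \cite{s1} shows that the difference of two strands of $\zeta$ is itself controlled by an eigenfunction with eigenvalue no smaller than the leading one, hence with winding at most $\op{wind}_\tau(\zeta)$. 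The paper does not spell this computation out either---it cites \cite[\S6]{pfh2} and explicitly appeals to Siefring's refined asymptotics---but that appeal is essential, and your sketch omits it.
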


\begin{proof}
Choose an identification $N\simeq (\R/\Z)\times D^2$ compatible with the trivialization $\tau$. The asymptotic behavior of holomorphic curves described in \cite{hwz1}, \cite{s1}, or \cite[Prop.\ 2.4]{ht2}, see the exposition in \cite[\S5.1]{bn}, implies the following. For $s_0>>0$, one can describe the intersection of this end of $u$ with $[s_0,\infty)\times N\subset [s_0,\infty)\times Y$ as the image of a map
\begin{align*}
[s_0,\infty)\times (\R/d\Z) &\longrightarrow \R\times (\R/\Z)\times D^2,\\
(s,t) &\longmapsto (s,\pi(t),\eta(s,t)),
\end{align*}
where $\pi:\R/d\Z\to\R/\Z$ denotes the projection, and $\eta$ is described as follows. Define the {\em asymptotic operator\/} $L$ from the space of smooth sections of $\xi|_{\gamma^d}$ to itself by
\[
L=J_{\pi(t)}\nabla_t,
\]
where $\nabla$ denotes the connection on $\xi|_{\gamma^d}$ defined by the linearized Reeb flow along $\gamma$. The operator $L$ is symmetric and so its eigenvalues are real. We now have
\begin{equation}
\label{eqn:eta}
\eta(s,t) = e^{-\mu s}\varphi(t) + O\left(e^{(-\mu-\varepsilon)s}\right),
\end{equation}
where $\mu>0$ is an eigenvalue of the asymptotic operator $L$, while $\varphi$ is a corresponding eigenfunction and $\varepsilon>0$.

It follows from the uniqueness of solutions to ODE's that the eigenfunction $\varphi$ is nowhere vanishing. Thus the eigenfunction $\varphi$ has a well-defined winding number around $0$, and together with \eqref{eqn:eta} this proves (a).

It is shown in \cite[\S3]{hwz2} that for each integer $n$, there are exactly two eigenvalues of $L$ for which eigenfunctions have winding number $n$. Here and below we count eigenvalues with multiplicity. Moreover, larger winding numbers correspond to smaller eigenvalues, and the largest possible winding number for a positive eigenvalue is $\floor{\op{CZ}_\tau(\gamma^d)/2}$. This implies (b).

To prove (c), note that the same argument in \cite[\S3]{hwz2} also shows that the smallest possible winding number of an eigenfunction of $L$ with negative eigenvalue is $\ceil{\op{CZ}_\tau(\gamma^d)/2}$. Since $\op{CZ}_\tau(\gamma^d)$ is assumed odd, we have a strict inequality $\floor{\op{CZ}_\tau(\gamma^d)/2} < \ceil{\op{CZ}_\tau(\gamma^d)/2}$. Consequently, the two (possibly equal) eigenvalues of $L$ whose eigenfunctions have winding number $\floor{\op{CZ}_\tau(\gamma^d)/2}$ are both positive. Thus, if equality does not hold in (b), then the eigenvalue $\mu$ in \eqref{eqn:eta} is not one of the two smallest positive eigenvalues of $L$ (counted with multiplicity as usual). Now, as pointed out by Chris Wendl, see \cite[Rmk.\ 3.3]{ht2}, one can use exponentially weighted Sobolev spaces to set up the moduli space of irreducible holomorphic curves in which the eigenvalue $\mu$ in \eqref{eqn:eta} is not one of the two smallest positive eigenvalues. If $J$ is generic, then somewhere injective holomorphic curves in this moduli space are cut out transversely, but the dimension of the moduli space is $2$ less than usual. Consequently there are no nontrivial somewhere injective holomorphic curves $u$ in this moduli space with $\op{ind}(u)\le 2$.

The analogue of (d) in an analytically simpler situation is proved in \cite[\S6]{pfh2}. 
This argument can be extended to the present case using the refined asymptotic analysis of Siefring \cite[Thms.\ 2.2 and 2.3]{s1}.
\end{proof}

\begin{remark}
\label{rem:improved}
Lemma~\ref{lem:positivewrithe}(b),(d) imply that
\[
w_\tau(\zeta) \le (d-1)\floor{\CZ_\tau(\gamma^d)/2}.
\]
In fact one can improve this to
\begin{equation}
\label{eqn:improved}
w_\tau(\zeta) \le (d-1)\floor{\CZ_\tau(\gamma^d)/2} - \op{gcd}\left(d,\floor{\CZ_\tau(\gamma^d)/2}\right)+1,
\end{equation}
see \cite{s2}. However we will not need this here.
\end{remark}

Symmetrically to Lemma~\ref{lem:positivewrithe}, we also have the following:

\begin{lemma}
\label{lem:negativewrithe}
Let $\gamma$ be an embedded Reeb orbit, let $u$ be a $J$-holomorphic curve in $\R\times Y$ with a negative end at $\gamma^d$ which is not part of a trivial cylinder or multiply covered component, and let $\zeta$ denote the intersection of this end with $\{s\}\times Y$. If $s<<0$, then the following hold:
\begin{description}
\item{(a)} $\zeta$ is the graph of a nonvanishing section of $\xi|_{\gamma^d}$, and thus has a well-defined winding number $\op{wind}_\tau(\zeta)$.
\item{(b)} $\op{wind}_\tau(\zeta) \ge \ceil{\op{CZ}_\tau(\gamma^d)/2}$.
\item{(c)} If $J$ is generic, $\op{CZ}_\tau(\gamma^d)$ is odd, and $\op{ind}(u)\le 2$, then equality holds in (b).
\item{(d)} $w_\tau(\zeta) \ge (d-1)\op{wind}_\tau(\zeta)$.
\end{description}
\end{lemma}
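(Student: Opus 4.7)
The plan is to mirror the proof of Lemma~\ref{lem:positivewrithe} verbatim, with the sign of the eigenvalue in the asymptotic formula reversed. For the negative end at $\gamma^d$, the asymptotic analysis of \cite{hwz1}, \cite{s1}, \cite[Prop.\ 2.4]{ht2} (see the exposition in \cite[\S5.1]{bn}) applies symmetrically: for $s_0 \ll 0$, choosing an identification $N\simeq(\R/\Z)\times D^2$ compatible with $\tau$, the intersection of this end with $(-\infty,s_0]\times N$ can be written as the image of
\[
(s,t)\longmapsto (s,\pi(t),\eta(s,t)),\qquad \eta(s,t) = e^{-\mu s}\varphi(t) + O\!\left(e^{(-\mu+\varepsilon)s}\right),
\]
where now $\mu<0$ is an eigenvalue of the asymptotic operator $L$ (negativity of $\mu$ is forced by the decay requirement $\eta\to 0$ as $s\to-\infty$) and $\varphi$ is a corresponding nowhere-vanishing eigenfunction. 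Uniqueness for ODEs gives part (a) just as before.

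For part (b), I would invoke the same result of \cite[\S3]{hwz2} used in the proof of Lemma~\ref{lem:positivewrithe}: negative eigenvalues of $L$ have eigenfunctions whose winding number is at least $\ceil{\op{CZ}_\tau(\gamma^d)/2}$, and larger winding numbers correspond to more negative eigenvalues. Since the winding number of $\zeta$ equals the winding number of $\varphi$, this yields the lower bound. For part (c), when $\op{CZ}_\tau(\gamma^d)$ is odd the strict inequality $\floor{\op{CZ}_\tau(\gamma^d)/2}<\ceil{\op{CZ}_\tau(\gamma^d)/2}$ implies that the two eigenvalues of $L$ whose eigenfunctions have winding number $\ceil{\op{CZ}_\tau(\gamma^d)/2}$ are both negative. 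If equality failed in (b), then $\mu$ would not be one of the two largest negative eigenvalues, and I would then apply the Wendl--type argument from \cite[Rmk.\ 3.3]{ht2}: setting up the moduli space of somewhere injective curves with this faster decay using exponentially weighted Sobolev spaces, genericity of $J$ makes the moduli space transversely cut out but of dimension $2$ less than the usual one, so no such $u$ with $\op{ind}(u)\le 2$ exists.

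Part (d) is the symmetric version of the writhe bound. I would follow the analogue of \cite[\S6]{pfh2} used for Lemma~\ref{lem:positivewrithe}(d), combined with the refined asymptotic analysis of Siefring \cite[Thms.\ 2.2, 2.3]{s1}. The inequality reverses direction because the parametrization of the end runs in the opposite sense (as $s\to-\infty$ rather than $s\to+\infty$); equivalently, the role of the crossing sign conventions is flipped when one projects a negative end to the annulus.

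The main potential obstacle is simply bookkeeping the sign conventions for (d), since everything else is strictly parallel to the positive-end argument. Because the proof is purely local near $\gamma^d$, depends only on the asymptotic operator $L$, and both the HWZ spectral result and Siefring's asymptotic formulas are stated symmetrically for both ends, no genuinely new input is required beyond the arguments already used for Lemma~\ref{lem:positivewrithe}.
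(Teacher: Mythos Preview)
Your proposal is correct and is exactly the approach the paper takes: the paper simply states that Lemma~\ref{lem:negativewrithe} holds ``symmetrically to Lemma~\ref{lem:positivewrithe}'' without giving a separate proof, and your write-up spells out precisely this symmetry (negative eigenvalue in the asymptotic expansion, the HWZ lower bound on the winding number for negative eigenvalues, the weighted-Sobolev argument for (c), and the Siefring asymptotics for (d)). There is nothing to add.
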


\subsection{Counting singularities}

We will also need the following inequality from intersection theory of holomorphic curves. As before, let $\gamma$ be an embedded Reeb orbit with tubular neighborhood $N$, and let $\tau$ be a trivialization of $\xi|_\gamma$.

\begin{lemma}
\label{lem:adjunction}
Let $u$ be a $J$-holomorphic curve in $[s_-,s_+]\times N$ with no multiply covered components and with boundary $\zeta_+-\zeta_-$ where $\zeta_\pm$ is a braid in $\{s_\pm\}\times N$. Then
\[
\chi(u) + w_\tau(\zeta_+) - w_\tau(\zeta_-) = 2\Delta(u) \ge 0,
\]
where $\chi(u)$ denotes the Euler characteristic of the domain of $u$, and $\Delta(u)$ is a count of the singularities of $u$ in $Y$ with positive integer weights.
\end{lemma}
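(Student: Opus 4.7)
My plan is to derive Lemma~\ref{lem:adjunction} as a direct application of the relative adjunction formula for somewhere injective $J$-holomorphic curves with boundary in a 4-manifold, adapted to the local model $[s_-,s_+]\times N$. I would model the argument on the writhe/adjunction machinery developed in \cite[\S2.7]{ir} and \cite[\S3.3]{bn}, which is designed precisely for this type of boundary-braid configuration at ends of holomorphic curves.

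The first step is to trivialize the ambient data. The trivialization $\tau$ of $\xi|_\gamma$ extends (via the tubular neighborhood structure and a choice of connection on $\xi$) to a trivialization of $\xi|_N$, identifying $N$ with $S^1\times D^2$ and the 4-manifold $[s_-,s_+]\times N$ with $[s_-,s_+]\times S^1\times D^2$. The tangent bundle of this 4-manifold is then trivial, and $\tau$ supplies a canonical nowhere-zero section of the normal bundle of $\gamma$ that produces a canonical pushoff of the boundary braids $\zeta_\pm$. With respect to this pushoff, both the relative first Chern class $c_\tau(u)$ and the relative self-intersection number $Q_\tau(u)$ (in the sense of \cite[\S2.7]{ir}) vanish.

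The second step is to define $\Delta(u)$ as the sum $\sum_p \delta_p$ over the (finitely many) singular points $p$ of $u$ in the interior, where $\delta_p\in\Z_{\ge 0}$ is the local delta invariant of the singularity -- equivalently, half the number of transverse double points appearing in a generic holomorphic perturbation of $u$ near $p$. Since $u$ has no multiply covered components it is somewhere injective, so positivity of intersections for distinct local branches of $J$-holomorphic curves (Micallef--White) forces $\delta_p\ge 0$ at every singular point, and hence $\Delta(u)\ge 0$.

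The final step is to assemble the relative adjunction formula by computing the intersection pairing of $u$ with a small pushoff $u'$, obtained by perturbing $u$ in the normal direction using $\tau$, in two ways. On the interior, positivity of intersections for the distinct somewhere injective curves $u$ and $u'$ concentrates near the singular points of $u$ and contributes $2\Delta(u)$. On the boundary, the intersection contributes the linking number of $\zeta_+$ with its $\tau$-pushoff minus the linking number of $\zeta_-$ with its $\tau$-pushoff, which is exactly $w_\tau(\zeta_+)-w_\tau(\zeta_-)$ by the definition of writhe. The same intersection number, computed via the relative Euler number of the normal bundle of $u$ in the trivialized 4-manifold, equals $-\chi(u)$ plus correction terms that all vanish because $c_\tau(u)=Q_\tau(u)=0$. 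Equating the two computations yields
\[
\chi(u)+w_\tau(\zeta_+)-w_\tau(\zeta_-)=2\Delta(u)\ge 0,
\]
as required. The main obstacle is the careful bookkeeping of signs and framings at the braid boundaries, but this is the same bookkeeping carried out in \cite{ir,bn} and I would simply specialize it to the present local setting.
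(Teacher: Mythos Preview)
Your proposal is correct and follows essentially the same approach as the paper: the paper simply cites the relative adjunction formula in \cite[Rmk.\ 3.2]{pfh2} and observes parenthetically that the relative first Chern class and relative self-intersection pairing terms vanish in this local situation, which is exactly the content of your first and third steps. Your write-up just unpacks what that citation means (pushoff, Micallef--White positivity, writhe as boundary linking) in the specific local model $[s_-,s_+]\times N$, so the two arguments coincide.
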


\begin{proof}
This is proved similarly to the relative adjunction formula in \cite[Rmk.\ 3.2]{pfh2}. (The relative first Chern class and relative self-intersection pairing terms there are zero in our situation.)
\end{proof}

\subsection{Proof of Proposition~\ref{prop:nobadbreak}}

The proof of Proposition~\ref{prop:nobadbreak} has four steps.

{\em Step 1.\/} We begin with some index calculations.

First note that the hypothesis on $u_1$ forces $\gamma$ to be elliptic. The reason is that by \eqref{eqn:ind} and \eqref{eqn:CZiteration}, a pair of pants which is a branched cover of a hyperbolic trivial cylinder has index $1$.

Choose a trivialization $\tau$ of $\xi|_\gamma$. 
Let $\theta\in\R\setminus\Q$ denote the rotation angle of $\gamma$ with respect to $\tau$. Then by \eqref{eqn:CZiteration}, the Conley-Zehnder index of $\gamma$ is given by
\begin{equation}
\label{eqn:CZgamma}
\op{CZ}_\tau(\gamma) = 2\floor{\theta}+1.
\end{equation}
Likewise, the Conley-Zehnder indices of $\gamma^d$ and $\gamma^{d+1}$ are given by
\begin{align}
\label{eqn:CZgammad}
\op{CZ}_\tau(\gamma^d) &= 2\floor{d\theta}+1,\\
\label{eqn:CZgammad1}
\op{CZ}_\tau(\gamma^{d+1}) &= 2\floor{(d+1)\theta} + 1.
\end{align}
Our assumption that $\op{ind}(u_1)=0$ is now equivalent to
\begin{equation}
\label{eqn:indu10}
\floor{(d+1)\theta} = \floor{d\theta}+\floor{\theta}.
\end{equation}
Here we are using the index formula \eqref{eqn:ind} and the fact that $c_\tau(u_1)=0$.

{\em Step 2.\/}
We now assume that the proposition is false and set up some notation.

Recall that $u_2$ an equivalence class of holomorphic curves in $\R\times Y$, where two holomorphic curves are equivalent iff they differ by $\R$-translation in $\R\times Y$. Choose a representative of this equivalence class and still denote it by $u_2$. Translate the holomorphic curve $u_2$ downward if necessary so that Lemma~\ref{lem:positivewrithe} is applicable to $s\ge 0$. 

Let $N$ be a tubular neighborhood of the Reeb orbit $\gamma$.
Fix $\varepsilon>0$ so that $u_2(u_2^{-1} (\{0\}\times N))$ has distance at least $\varepsilon$ from $\gamma$.

Suppose to get a contradiction that there exists a sequence of $J$-holomorphic cylinders $\{u(k)\}$ in $\mc{M}^J(\gamma^{d+1},\gamma^d)/\R$ which converges in the sense of \cite{behwz} to the building $(u_1,u_2)$. Then for sufficiently large $k$, the equivalence class $u(k)$ in $\mc{M}^J(\gamma^{d+1},\gamma^d)/\R$ has a representative $u\in\mc{M}^J(\gamma^{d+1},\gamma^d)$ with the following properties:
\begin{description}
\item{(i)} $u^{-1}([0,\infty)\times Y)$ is an annulus with one puncture, which is mapped by $u$ to $[0,\infty)\times N$.
\item{(ii)} $u^{-1}((-\infty,0]\times Y)$ consists of a closed disk $D$ and a half-cylinder $C$.
\item{(iii)} $u(C)$ is contained in $(-\infty,0]\times N$, and $u(C)\cap(\{0\}\times N)$ is a braid $\zeta_1$ which projects to $\gamma$ with degree $d$ and has distance at most $\varepsilon/3$ from $\gamma$.
\item{(iv)} $u(D)\cap(\{0\}\times N)$ is a braid $\zeta_2$ which projects to $\gamma$ with degree $1$ and is within distance $\varepsilon/3$ of $u_2(u_2^{-1}(\{0\}\times N))$.
\end{description}
Also let $\zeta_+$ denote the braid corresponding to the positive end of $u$ at $\gamma^{d+1}$, and let $\zeta_-$ denote the braid corresponding to the negative end of $u$ at $\gamma^d$.

{\em Step 3.\/} We now obtain some inequalities from the previous lemmas.

Since $\zeta_1$ is within distance $\varepsilon/3$ of $\gamma$, while $\zeta_2$ has distance at least $2\varepsilon/3$ from $\gamma$, it follows that $\zeta_1\cup\zeta_2$ is a braid and
\[
w_\tau(\zeta_1\cup\zeta_2) = w_\tau(\zeta_1) + 2d\op{wind}_\tau(\zeta_2) + w_\tau(\zeta_2).
\]
Since the braid $\zeta_2$ projects to $\gamma$ with degree $1$, we have
\begin{equation}
\label{eqn:wtz20}
w_\tau(\zeta_2)=0.
\end{equation}
 By Lemma~\ref{lem:adjunction}, we have
\[
-1 + w_\tau(\zeta_+) - w_\tau(\zeta_1\cup\zeta_2) = 2\Delta_+ \ge 0
\]
where $\Delta_+$ denotes the count of singularities of $u$ in $[0,\infty)\times Y$.
By Lemma~\ref{lem:adjunction} again we have
\[
w_\tau(\zeta_1) - w_\tau(\zeta_-) = 2\Delta_- \ge 0
\]
where $\Delta_-$ denotes the count of singularities of $u|_C$. 
Putting the above four lines together gives
\begin{equation}
\label{eqn:combineadjunction}
-1 + w_\tau(\zeta_+) - 2d\op{wind}_\tau(\zeta_2) - w_\tau(\zeta_-) \ge 0.
\end{equation}

By Lemma~\ref{lem:positivewrithe}(b),(d) and equation \eqref{eqn:CZgammad1}, we have
\[
w_\tau(\zeta_+) \le d\floor{(d+1)\theta}.
\]
Since $J$ is assumed generic, applying Lemma~\ref{lem:positivewrithe}(b),(c) to $u_2$ and using equation \eqref{eqn:CZgamma} gives
\[
\op{wind}_\tau(\zeta_2) = \floor{\theta}.
\]
Finally, Lemma~\ref{lem:negativewrithe}(b),(d) and equation \eqref{eqn:CZgammad} give
\[
w_\tau(\zeta_-) \ge (d-1)(\floor{d\theta}+1).
\]
Putting the above three lines into \eqref{eqn:combineadjunction} gives
\begin{equation}
\label{eqn:combinelemmas}
d(\floor{(d+1)\theta}-2\floor{\theta}-1) - (d-1)\floor{d\theta} \ge 0.
\end{equation}

{\em Step 4.\/} We now complete the proof. Combining \eqref{eqn:indu10} with \eqref{eqn:combinelemmas} gives
\[
\floor{d\theta} \ge d(\floor{\theta} + 1).
\]
This is impossible because for any positive integer $d$ and real number $\theta$ we have $\floor{d\theta}\le d\theta < d(\floor{\theta}+1)$.  This contradiction completes the proof of Proposition~\ref{prop:nobadbreak}.

\begin{remark}
\label{remark:technical}
One should be able to show more generally that a sequence of cylinders cannot converge to a building as in case (iii) of Proposition~\ref{prop:1neg}(c) with $d_2$ arbitrary. To do so, one can follow the above argument, but one would need to generalize \eqref{eqn:wtz20} to get a formula for $w_\tau(\zeta_2)$. The required formula for $w_\tau(\zeta_2)$ would follow from:
\end{remark}

\begin{conjecture}
\label{conj:technical}
Under the assumptions of Lemma~\ref{lem:positivewrithe}(c), the inequality \eqref{eqn:improved} in Remark~\ref{rem:improved} is an equality.
\end{conjecture}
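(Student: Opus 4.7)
My plan is to combine Siefring's asymptotic analysis, which produces the bound \eqref{eqn:improved}, with a transversality argument in the spirit of Lemma~\ref{lem:positivewrithe}(c). Starting from Siefring's refined asymptotic expansion \cite[Thms.~2.2 and 2.3]{s1}, I would replace the leading-order formula \eqref{eqn:eta} by a multi-term expansion
\[
\eta(s,t) = \sum_{i=1}^{k} e^{-\mu_i s}\varphi_i(t) + O\bigl(e^{-(\mu_k+\epsilon)s}\bigr),
\]
where $0<\mu_1<\mu_2<\cdots$ are distinct positive eigenvalues of the asymptotic operator $L$ and $\varphi_i$ are corresponding eigenfunctions. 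The hypotheses of Lemma~\ref{lem:positivewrithe}(c) already force the leading eigenvalue $\mu_1$ to be one of the two eigenvalues whose eigenfunctions have the maximal winding number $w \eqdef \floor{\CZ_\tau(\gamma^d)/2}$.

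Next I would show that the braid traced out by the leading eigenfunction $\varphi_1$ alone has writhe exactly $(d-1)w-\gcd(d,w)+1$. This should be a structural statement about $L$: because of the $\Z/d$-symmetry coming from the deck transformation of $\R/d\Z\to\R/\Z$, an eigenfunction with winding $w$ lies in a specific character subspace, and its graph in $(\R/\Z)\times D^2$ is isotopic to a standard torus-type braid of type $T(d,w)$ whose writhe equals the claimed quantity. This is the linear-algebra/ODE computation that underlies Siefring's bound \eqref{eqn:improved} in \cite{s2}.

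The third step is to account for subleading terms. The writhe of the full braid $\zeta$ differs from that of the leading braid precisely when near-crossings between the $d$ strands of $\varphi_1$ get resolved differently by the subleading corrections $e^{-\mu_i s}\varphi_i$, $i\ge 2$. Each such writhe-reducing mechanism corresponds to the vanishing of an explicit ``asymptotic coefficient'' built from $(\varphi_1,\varphi_2,\ldots)$. Following the Wendl-type construction in \cite[Rmk.~3.3]{ht2}, I would set up a Fredholm moduli space using exponentially weighted Sobolev norms whose weights are chosen between successive eigenvalues, so that each such asymptotic coefficient becomes a transversality condition cutting dimension by at least $2$. Because $\op{ind}(u)\le 2$ by hypothesis, any constrained stratum on which some coefficient vanishes then has nonpositive dimension after quotienting by the $\R$-action on nontrivial curves, and for generic $J$ must be empty; hence the writhe of $\zeta$ must coincide with that of the leading torus braid.

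The main obstacle will be this third step: one must package the entire cascade of subleading eigenvalue conditions into a single Fredholm theory so that every possible writhe-deficit scenario is detected and imposes codimension at least $2$. Extending the single-``jump'' weighted Sobolev construction of \cite[Rmk.~3.3]{ht2} to a sequence of jumps, while keeping the resulting moduli spaces transversely cut out by a generic $J$, is the delicate technical task that presumably prevents the authors from promoting the statement to a theorem. It would require working out Siefring's asymptotic expansion in enough detail to pin down the explicit asymptotic coefficients governing writhe deficits, and then establishing a cascaded version of Wendl's transversality result to ensure that each such coefficient is cut transversely.
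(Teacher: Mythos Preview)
The statement is a \emph{conjecture} in the paper; there is no proof to compare against, only a one-sentence remark immediately following it. That remark says the conjecture would follow once one shows that the first \emph{two} coefficients in the asymptotic expansion of the end are nonzero, whereas Lemma~\ref{lem:positivewrithe}(c) establishes this only for the first coefficient.

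Your outline is in the same spirit---use Siefring's refined asymptotics together with a weighted-Sobolev transversality argument in the style of \cite[Rmk.~3.3]{ht2} to force the relevant asymptotic coefficients to be nonzero for generic $J$---and your Step~2 (identifying the braid of the leading eigenfunction with a $(d,w)$-torus braid of writhe $(d-1)w-\gcd(d,w)+1$) is exactly the mechanism behind Siefring's bound \eqref{eqn:improved}. The main difference is one of scope: your Step~3 anticipates an entire cascade of subleading conditions, each requiring its own weighted Fredholm setup, while the paper's hint suggests that a \emph{single} additional step---nonvanishing of the second coefficient---already forces equality. If that is correct, the required transversality is no harder than what is already done for Lemma~\ref{lem:positivewrithe}(c): one further exponential weight drops the expected dimension by another $2$, and the hypothesis $\op{ind}(u)\le 2$ again empties the constrained stratum for generic $J$. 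So your plan is reasonable but likely more elaborate than necessary; the open technical point, as the paper frames it, is not constructing a cascade of Fredholm problems but rather verifying that nonvanishing of just the second coefficient already resolves all the writhe-deficit crossings.
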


Conjecture~\ref{conj:technical} would follow if one could show that the first two coefficients in the asymptotic expansion of the end of the holomorphic curve are nonzero, instead of just the first coefficient as in Lemma~\ref{lem:positivewrithe}(c).

\section{Proof of the main theorem}
\label{sec:gluing}

We now prove Theorem~\ref{thm:main}, after some preliminaries on transversality and gluing.

\subsection{Automatic transversality}

We begin with an automatic transversality lemma. Much more general automatic transversality results are proved in \cite{wendl}, but we recall the proof of this simple lemma for the convenience of the reader.

Let $\lambda$ be a nondegenerate contact form on a three-manifold $Y$ and let $J$ be a $\lambda$-compatible almost complex structure. 
If $u:(\Sigma,j)\to(\R\times Y,J)$ is a $J$-holomorphic immersion (with ends at Reeb orbits as usual), with normal bundle $N$, then it has a deformation operator
\[
D_u: L^2_1(\Sigma,N)\to L^2(\Sigma,T^{0,1}\Sigma\tensor_\C N).
\]
The moduli space of holomorphic curves near $u$ is cut out transversely when $D_u$ is surjective, in which case the tangent space to the moduli space can be identified with $\Ker(D_u)$. Let $h_+(u)$ denote the number of ends of $u$ at positive hyperbolic orbits (including even covers of negative hyperbolic orbits).

\begin{lemma}
\label{lem:atrecall}
Let $\lambda$ be a nondegenerate contact form on a three-manifold $Y$ and let $J$ be a $\lambda$-compatible almost complex structure on $\R\times Y$. Let $u$ be a $J$-holomorphic immersion as above. If the domain $\Sigma$ is connected with genus $g(\Sigma)$, and if
\begin{equation}
\label{eqn:atcondition}
2g(\Sigma) - 2 + h_+(u) < \op{ind}(u),
\end{equation}
then $D_u$ is surjective (without any genericity assumption on $J$).
\end{lemma}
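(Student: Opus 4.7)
The approach is the standard automatic transversality argument in dimension four, going back to Hofer--Lizan--Sikorav in the closed case and carried out for punctured surfaces by Wendl \cite{wendl}. I argue by contradiction: assume $D_u$ is not surjective. Because $u$ is an immersion, $D_u$ is a complex-linear Cauchy--Riemann type operator on the normal line bundle $N$, and its formal $L^2$-adjoint $D_u^*$ is another such operator on $\overline{N}$ whose asymptotic operators at each puncture are the negatives of those of $D_u$. Since $\op{coker} D_u \cong \ker D_u^*$, it suffices to rule out the existence of a nonzero $\eta \in \ker D_u^*$.

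Fix such an $\eta$. By the Carleman similarity principle the interior zeros of $\eta$ are isolated and have positive local orders; write $Z(\eta) \ge 0$ for their sum. At each puncture $p_i$, the exponential asymptotic analysis of Hofer--Wysocki--Zehnder and Siefring (already invoked in the proof of Lemma~\ref{lem:positivewrithe}) shows that $\eta$ is asymptotic to a nonzero eigensection of the asymptotic operator; the sign reversal in passing to the adjoint forces the relevant eigenvalue to be \emph{negative} at the positive ends of $u$ and \emph{positive} at the negative ends, so $\eta$ has a well-defined asymptotic winding $w^\infty_i(\eta) \in \Z$ relative to $\tau$. From the spectral description in \cite{hwz2} (the same one used to prove Lemma~\ref{lem:positivewrithe}(b)), eigensections with negative eigenvalue have winding $\ge \lceil \CZ_\tau(\gamma)/2 \rceil$ and those with positive eigenvalue have winding $\le \lfloor \CZ_\tau(\gamma)/2 \rfloor$; in particular $w^\infty_i(\eta) \ge \lceil \CZ_\tau(\gamma_i)/2 \rceil$ at each positive puncture and $w^\infty_j(\eta) \le \lfloor \CZ_\tau(\gamma_j)/2 \rfloor$ at each negative puncture. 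The relative first Chern formula then expresses $Z(\eta)$ as $c_1(\overline{N},\tau)$ plus an alternating sum of the $w^\infty$'s; using the standard identity relating $c_1(N,\tau)$ to $c_\tau(u)$ and $\chi(\Sigma)$ for an immersion into $\R \times Y$ together with the index formula \eqref{eqn:ind}, this rearranges into
$$2 Z(\eta) \;\le\; 2 g(\Sigma) - 2 + h_+(u) - \op{ind}(u).$$
The term $h_+(u)$ appears because at ends whose orbit has \emph{odd} $\CZ$ one has a strict gap $\lfloor \CZ/2 \rfloor < \lceil \CZ/2 \rceil$ contributing an extra $+1$ per end to the bound, while ends whose orbit has \emph{even} $\CZ$ (precisely those counted by $h_+(u)$) give no such improvement. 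Under the hypothesis \eqref{eqn:atcondition} the right-hand side is strictly negative, contradicting $Z(\eta) \ge 0$.

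The only delicate point is the bookkeeping in the last step: correctly relating $c_1(N,\tau)$ to the relative first Chern class $c_\tau(u)$ entering \eqref{eqn:ind} for an immersed curve with cylindrical ends (where an $\chi(\Sigma)$ correction enters), and organizing the odd-versus-even $\CZ$ parity contributions at the punctures so that they sum precisely to $2g(\Sigma) - 2 + h_+(u)$ on the right-hand side. These are standard computations from \cite{wendl} and its antecedents; none of the ingredients is deep, but the trivialization and orientation conventions must be handled with care.
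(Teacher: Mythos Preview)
Your proposal is correct and follows essentially the same argument as the paper's proof: contradiction via a nonzero cokernel element, Carleman similarity to control the sign of its zero count, the HWZ asymptotic winding bounds at the punctures, and the relative first Chern class / index bookkeeping to reach an inequality contradicting \eqref{eqn:atcondition}. The only cosmetic difference is that the paper views $D_u^*$ as acting on $T^{0,1}\Sigma\otimes_\C N$, so that zeros of the cokernel element count with \emph{negative} multiplicity (giving $\#\psi^{-1}(0)\le 0$), whereas you phrase things via $\overline{N}$ with a nonnegative zero count $Z(\eta)\ge 0$; the resulting inequality is the same.
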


\begin{proof}
Suppose that $D_u$ is not surjective. Then there is a nonzero element $\psi$ of the kernel of the formal adjoint
\[
D_u^*: L^2_1(\Sigma,T^{0,1}\Sigma\tensor_\C N)\longrightarrow L^2(\Sigma,N).
\]
The Carleman similarity principle implies that the zeroes of $\psi$, if any, are isolated and have negative multiplicity. The asymptotic behavior of $\psi$ which we will describe in a moment implies that the zeroes of $\psi$ are contained in a compact set. It follows that the count of zeroes of $\psi$ with multiplicity, which we denote by $\#\psi^{-1}(0)$, is well defined and satisfies
\begin{equation}
\label{eqn:negativezeroes}
\#\psi^{-1}(0) \le 0.
\end{equation}

Let $\tau$ be a trivialization of $\xi$ over the Reeb orbits at which $u$ has ends; this induces a trivialization of $T^{0,1}\Sigma\tensor_\C N$ over the ends of $u$. By the definition of the relative first Chern class, we have
\[
\#\psi^{-1}(0) = c_1(T^{0,1}\Sigma\tensor_\C N,\tau) + \op{wind}_\tau(\psi)
\]
where $\op{wind}_\tau(\psi)$ denotes the sum over the positive ends of $u$ of the winding number of $\psi$ around the end with respect to the trivialization $\tau$, minus the corresponding sum over the negative ends of $u$. Since
\[
c_1(T^{0,1}\Sigma\tensor_\C N,\tau) = \chi(\Sigma) + c_1(N,\tau) = c_1(u^*T(\R\times Y),\tau) = c_1(u^*\xi,\tau),
\]
we can rewrite the previous formula as
\begin{equation}
\label{eqn:countzeroes}
\#\psi^{-1}(0) = c_\tau(\xi) + \op{wind}_\tau(\psi).
\end{equation}

Similarly to \eqref{eqn:eta}, on a positive end at a (possibly multiply covered) Reeb orbit $\gamma$, the section $\psi$ has the asymptotic behavior
\[
\psi(s,t) = e^{\mu s}\varphi(t) + O\left(e^{(\mu-\varepsilon)s}\right),
\]
where $\mu$ is now a {\em negative\/} eigenvalue of the asymptotic operator $L$ associated to $\gamma$. It follows from Lemma~\ref{lem:negativewrithe}(b) that the winding number of $\psi$ around this end is at least $\ceil{\CZ_\tau(\gamma)/2}$. Together with an analogous calculation for the negative ends, it follows that if $u$ has positive ends at $\alpha_1,\ldots,\alpha_k$ and negative ends at $\beta_1,\ldots,\beta_l$, then
\[
\op{wind}_\tau(\psi) \ge \sum_{i=1}^k \ceil{\CZ_\tau(\alpha_i)/2} - \sum_{j=1}^l \floor{\CZ_\tau(\beta_j)/2}.
\]
Since the Conley-Zehnder index of a Reeb orbit is even exactly when that Reeb orbit is positive hyperbolic, we deduce that
\begin{equation}
\label{eqn:windbound}
2\op{wind}_\tau(\psi) \ge k + l - h_+(u) + \sum_{i=1}^k\CZ_\tau(\alpha_i) - \sum_{j=1}^l\CZ_\tau(\beta_j).
\end{equation}
Combining \eqref{eqn:negativezeroes}, \eqref{eqn:countzeroes}, and \eqref{eqn:windbound} with the index formula \eqref{eqn:ind} gives
\[
\op{ind}(u) + 2 - 2g(\Sigma) - h_+(u) \le 0.
\]
This is the negation of the hypothesis \eqref{eqn:atcondition}.
\end{proof}

\subsection{Transversality}

We can now establish the transversality needed to define cylindrical contact homology. Some of the following lemma was also proved in \cite[\S2.3]{momin}.

\begin{lemma}
\label{lem:at}
Let $Y$ be a closed three-manifold with a nondegenerate contact form $\lambda$. Let $J$ be a generic $\lambda$-compatible almost complex structure on $\R\times Y$. Then:
\begin{description}
\item{(a)}
For any Reeb orbits $\gamma_+$ and $\gamma_-$, the moduli space $\mc{M}^J_1(\gamma_+,\gamma_-)/\R$ is a $0$-manifold cut out transversely.
\item{(b)}
If $\gamma_+$ and $\gamma_-$ are good Reeb orbits, then the moduli space $\mc{M}^J_2(\gamma_+,\gamma_-)/\R$ is a $1$-manifold cut out transversely.
\item{(c)} If $\gamma_+$ and $\gamma_-$ are good, then the function
\[
d:\mc{M}^J_2(\gamma_+,\gamma_-)/\R \longrightarrow \Z^{>0},
\]
which associates to each cylinder its covering multiplicity, is locally constant.
\end{description}
\end{lemma}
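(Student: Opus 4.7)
All three parts will follow from applying the automatic transversality Lemma~\ref{lem:atrecall} to cylinders, combined with the multiple cover comparison of Lemma~\ref{lem:cylindercoverindex}(a) and the iteration formula \eqref{eqn:CZiteration}.

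For (a) and (b), any $u \in \mc{M}_k(\gamma_+,\gamma_-)$ with $k \in \{1,2\}$ has genus $g = 0$ and at most two positive hyperbolic ends, so
\[
2g - 2 + h_+(u) \le 0 < k = \op{ind}(u),
\]
and Lemma~\ref{lem:atrecall} yields surjectivity of $D_u$. This applies directly to multiply covered cylinders as well as to somewhere injective ones, since an unbranched cover of an immersion remains an immersion. Hence $\mc{M}_k(\gamma_+,\gamma_-)$ is a smooth $k$-manifold, and quotienting by the free $\R$-translation action proves (a) and (b).

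For (c), suppose $u \in \mc{M}_2(\gamma_+,\gamma_-)$ is a degree-$d$ cover of a somewhere injective cylinder $\bar u$ with positive end at $\alpha$ and negative end at $\beta$. Applying Lemma~\ref{lem:atrecall} to $\bar u$ shows that nearby somewhere injective cylinders form a smooth manifold of dimension $\op{ind}(\bar u)$, and composition with the unbranched $d$-fold covering map embeds this as a smooth submanifold of $\mc{M}_2(\gamma_+,\gamma_-)$ consisting entirely of $d$-fold covers. Since $\mc{M}_2(\gamma_+,\gamma_-)$ is itself a $2$-manifold by (b), local constancy of the covering multiplicity at $u$ will follow from $\op{ind}(\bar u) = 2$: the submanifold of $d$-fold covers will then be open near $u$, forcing nearby cylinders to have covering multiplicity $d$ as well.

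By Lemma~\ref{lem:cylindercoverindex}(a) we have $\op{ind}(\bar u) \in \{1,2\}$, so it remains to rule out $\op{ind}(\bar u) = 1$ by a short case analysis. Fix a trivialization $\tau$ with $c_\tau(\bar u) = 0 = c_\tau(u)$. When both $\alpha,\beta$ are hyperbolic, \eqref{eqn:CZiteration} gives $\op{CZ}_\tau(\gamma^m) = m\op{CZ}_\tau(\gamma)$, so $\op{ind}(u) = d\op{ind}(\bar u)$; then $\op{ind}(u) = 2$, $\op{ind}(\bar u) = 1$ forces $d = 2$, and the parity constraint on $\op{ind}(\bar u)$ forces one end of $\bar u$ to be negative hyperbolic, but its double cover is bad, contradicting the goodness of $\gamma_\pm$. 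When one end (say $\beta$) is elliptic with irrational rotation number $\theta$, writing $\lfloor d\theta\rfloor = d\lfloor\theta\rfloor + \lfloor d\{\theta\}\rfloor$ and substituting into the index formulas gives
\[
\op{ind}(u) - d\op{ind}(\bar u) = d - 1 - 2\lfloor d\{\theta\}\rfloor;
\]
setting $\op{ind}(u) = 2$, $\op{ind}(\bar u) = 1$ yields $2\lfloor d\{\theta\}\rfloor = 2d - 3$, impossible by parity. When both ends are elliptic, $\op{ind}(\bar u)$ is even by inspection. The main obstacle is precisely this arithmetic case analysis in (c); parts (a) and (b) are essentially immediate from Lemma~\ref{lem:atrecall}.
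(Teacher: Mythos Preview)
Your argument has two genuine gaps.

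\textbf{Immersion.} Lemma~\ref{lem:atrecall} requires $u$ to be an immersion. You note that an unbranched cover of an immersion is an immersion, but you never explain why the underlying somewhere injective cylinder $\overline{u}$ is immersed. Somewhere injective $J$-holomorphic curves can have critical points; the paper invokes a genericity result (\cite[Thm.~4.1]{ht2}) guaranteeing that for generic $J$, somewhere injective curves of index $\le 2$ are immersed. You need Lemma~\ref{lem:cylindercoverindex}(a) here already, to bound $\op{ind}(\overline{u})\le 2$, before you can conclude $\overline{u}$ (hence $u$) is immersed.

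\textbf{The orbifold issue in (b).} Surjectivity of $D_u$ does \emph{not} by itself make $\mc{M}_2(\gamma_+,\gamma_-)$ a manifold near a multiply covered $u$. The $\Z/d$ group of deck transformations acts on the slice transverse to the reparametrization orbit, and a priori the quotient is only an orbifold. One must check that $\Z/d$ acts trivially on $\Ker(D_u)$; the paper does this by showing $\op{ind}(u)=\op{ind}(\overline{u})$, so that every element of $\Ker(D_u)$ is pulled back from $\Ker(D_{\overline{u}})$ and is therefore $\Z/d$-invariant. Your case analysis in (c), ruling out $\op{ind}(u)=2$ with $\op{ind}(\overline{u})=1$ when $\gamma_\pm$ are good, is exactly this ingredient --- but you invoke (b) in your proof of (c), so the logic is circular. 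The fix is simply to reorganize: establish $\op{ind}(u)=\op{ind}(\overline{u})$ first, and then (b) and (c) both follow. (For (a) the orbifold issue is vacuous since $\op{ind}(\overline{u})=1=\op{ind}(u)$ is immediate from Lemma~\ref{lem:cylindercoverindex}(a).)

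A minor point: in your ``one end elliptic'' case the displayed formula for $\op{ind}(u)-d\,\op{ind}(\overline{u})$ is specific to $\beta$ elliptic; when $\alpha$ is elliptic the sign flips. The parity contradiction survives either way, but the ``say $\beta$'' does not make the two cases literally identical.
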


\begin{proof}
We know from \cite{dragnev} that if $J$ is generic, then all irreducible somewhere injective $J$-holomorphic curves are cut out transversely.  It is also shown in \cite[Thm.\ 4.1]{ht2} that if $J$ is generic, then all irreducible somewhere injective $J$-holomorphic curves of index $\le 2$ are immersed. To prove the lemma, we will show that if $J$ satisies the above two properties, then (a), (b) and (c) hold.

Let $u\in\mc{M}_k^J(\gamma_+,\gamma_-)$ be a $J$-holomorphic cylinder with $\op{ind}(u)=k\in\{1,2\}$, and assume that $\gamma_+$ and $\gamma_-$ are good when $k=2$. Then $u$ is a $d$-fold cover of a somewhere injective cylinder $\overline{u}\in\mc{M}^J(\overline{\gamma_+}, \overline{\gamma_-})$. Since $\op{ind}(u)\le 2$, it follows from Lemma~\ref{lem:cylindercoverindex}(a) that $\op{ind}(\overline{u})\le 2$. Then $\overline{u}$ is immersed, and consequently $u$ is also immersed.

Since $u$ is a cylinder of positive index, the inequality \eqref{eqn:atcondition} must hold, so Lemma~\ref{lem:atrecall} implies that $D_u$ is surjective.
This does not yet prove (a) and (b), because a priori $\mc{M}^J_k(\gamma_+,\gamma_-)$ might only be an orbifold near $u$. To prove that $\mc{M}^J_k(\gamma_+,\gamma_-)$ is in fact a manifold near $u$, we need to further show that the order $d$ group of deck transformations of $u$ over $\overline{u}$ acts trivially on $\op{Ker}(D_u)$. For this purpose, and also to prove (c), it will suffice to show that every element of $\op{Ker}(D_u)$ is pulled back from an element of $\op{Ker}(D_{\overline{u}})$. To prove this last claim, it is enough to show that $\op{ind}(u)=\op{ind}(\overline{u})$.

We know that $u$ and $\overline{u}$ both have index $1$ or $2$. We just need to rule out the case where $\op{ind}(u)=2$ and $\op{ind}(\overline{u})=1$. In this case, one of $\overline{\gamma_+}$, $\overline{\gamma_-}$ is positive hyperbolic, while the other is elliptic or negative hyperbolic. The elliptic case is impossible because then all covers of $\overline{u}$ have odd index. Thus one of $\overline{\gamma_+}$, $\overline{\gamma_-}$ is positive hyperbolic and the other is negative hyperbolic. Then $\op{ind}(u)=d$, so $d=2$. This contradicts the assumption that $\gamma_+$ and $\gamma_-$ are good.
\end{proof}

\subsection{Gluing}

\begin{lemma}
\label{lem:gluing}
Assume $J$ is generic. Suppose that $u_+\in\mc{M}_1^J(\gamma_+,\gamma_0)/\R$ and $u_-\in\mc{M}_1^J(\gamma_0,\gamma_-)/\R$. Assume that $\gamma_+$ and $\gamma_-$ are good Reeb orbits, and let $k=\op{gcd}(d(u_+),d(u_-))$. Then:
\begin{description}
\item{(a)}
There are exactly $kd(\gamma_0)/d(u_+)d(u_-)$ ends of the moduli space $\mc{M}_2^J(\gamma_+,\gamma_-)/\R$ that converge to the building $(u_+,u_-)$.
\item{(b)}
Each such end consists of cylinders with $d=k$.
\end{description}
\end{lemma}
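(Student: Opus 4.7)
My plan is to combine the standard SFT pre-gluing plus Newton-iteration scheme for the underlying simple cylinders with a combinatorial analysis of how domain automorphisms act on the asymptotic matching data at $\gamma_0$.

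\emph{Setup.} Let $\gamma$ denote the embedded Reeb orbit with $\gamma_0 = \gamma^m$, where $m = d(\gamma_0)$, set $d_\pm = d(u_\pm)$, and $k = \op{gcd}(d_+, d_-)$. By Lemma~\ref{lem:at}, the somewhere injective cylinders $\overline{u_\pm}$ underlying $u_\pm$ are transversely cut-out index-$1$ cylinders whose matching-side ends lie at $\gamma^{m/d_+}$ and $\gamma^{m/d_-}$. Let $v_\pm$ denote the $(d_\pm/k)$-fold cover of $\overline{u_\pm}$; then $v_\pm$ both have end at $\gamma^{m/k}$, satisfy $\op{gcd}(d(v_+), d(v_-)) = 1$, and $u_\pm = v_\pm^k$.

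\emph{Proof of (b).} For any end of $\M_2^J(\gamma_+, \gamma_-)/\R$ converging to $(u_+, u_-)$, I claim the cylinders $u$ in that end satisfy $d(u) = k$. The bound $d(u) \le k$ follows because as the neck length grows, the positive (resp.\ negative) end of $u$ becomes arbitrarily close to $u_+$ (resp.\ $u_-$) in the $C^\infty$ sense on compact subsets, so the multiple-cover structure propagates: $d(u) \mid d_+$ and $d(u) \mid d_-$, hence $d(u) \mid k$. The bound $d(u) \ge k$ follows from the $\Z/k$-deck transformation symmetry of the pre-glued approximate solution, which is inherited from the common $\Z/k$-cover structure of $u_\pm \to v_\pm$ and preserved under Newton iteration carried out $\Z/k$-equivariantly; this forces $u$ to be a $k$-fold cover of a somewhere injective index-$2$ cylinder.

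\emph{Proof of (a).} I would follow the standard pre-gluing plus Newton iteration framework (as in \cite{behwz}), using the automatic transversality of $\overline{u_\pm}$ from Lemma~\ref{lem:atrecall} to provide the needed right-inverse of the linearized Cauchy--Riemann operator. The pre-gluing of $u_+$ and $u_-$ along a neck of length $R \gg 0$ modeled on $\gamma_0$ depends additionally on an angular matching parameter in the reparametrization circle $\R/T\Z$ of $\gamma_0$. Pre-gluings with different matching parameters yield equivalent honest solutions (in $\M_2^J(\gamma_+,\gamma_-)/\R$) exactly when the parameters differ by a shift from the combined domain-automorphism groups of $u_+$ and $u_-$: the $\Z/d_\pm$ deck transformation subgroups of the $S^1$-rotations of the domains of $u_\pm$ act by shifts of $T/d_+$ and $T/d_-$ respectively, and these generate a subgroup of $\R/T\Z$ of order $\op{lcm}(d_+, d_-)$, while the remaining continuous $S^1$-action on the glued curve's domain is absorbed into the choice of $R$. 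Consequently the number of inequivalent gluings is $m/\op{lcm}(d_+, d_-) = k\,d(\gamma_0)/(d(u_+)\,d(u_-))$.

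The principal obstacle is the precise combinatorial bookkeeping in this last step: tracking the discrete rotational offsets of the leading asymptotic eigenfunctions at the multiply covered orbit $\gamma^m$ modulo the various $\Z/d_\pm$ and $S^1$ automorphism subgroups, and verifying that the resulting count cleanly coincides with the $\Z/k$-equivariance giving (b). Both the $\Z/k$-equivariant Newton iteration and the subgroup calculation in $\R/T\Z$ are standard but require care.
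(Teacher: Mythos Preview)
Your argument for (a) is essentially the paper's: both count discrete matching data at $\gamma_0=\gamma^m$ modulo the deck-transformation symmetries of $u_\pm$, arriving at $m/\operatorname{lcm}(d_+,d_-)=km/(d_+d_-)$. The paper phrases this as fixing a basepoint $p\in\gamma$, counting the $(m/d_+)(m/d_-)$ pairs of parametrizations $(\phi_+,\phi_-)$ hitting $p$, and modding out by the diagonal $\Z/m$-rotation (each orbit has size $m/k$); your ``angular matching parameter in $\R/T\Z$ modulo the subgroup generated by shifts of $T/d_\pm$'' is the same count after unwinding. Your phrasing about a continuous parameter with the residual $S^1$ ``absorbed into $R$'' is slightly muddled---the matching is genuinely discrete once domain reparametrization is fixed---but the arithmetic is right.

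For (b) you take a genuinely different route. The paper proves $d\mid k$ as you do, but for $d\ge k$ it argues indirectly by \emph{counting}: it applies part (a) to the pair $(\widehat{u_+},\widehat{u_-})=(v_+,v_-)$, observes that the number of ends converging to $(v_+,v_-)$ equals the number converging to $(u_+,u_-)$, and concludes that every end for $(u_+,u_-)$ is the $k$-fold cover of one for $(v_+,v_-)$. Your argument is more direct: any pregluing of $u_+=v_+^k$ with $u_-=v_-^k$ carries a $\Z/k$ domain symmetry (the deck actions over $v_\pm$ both act on the neck by the shift $T/k$, hence agree), and Newton iteration done $\Z/k$-equivariantly preserves it. This is correct and has the virtue of making (b) logically independent of (a); the cost is that you must verify the equivariance carefully (in particular, that the right inverse to the linearized operator can be chosen $\Z/k$-equivariantly, which follows from uniqueness of the canonical right inverse or by averaging). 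The paper's counting argument avoids this analytic check by leveraging (a), which it proves first.
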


\begin{proof}
(a)
Fix a point $p$ on the image of $\gamma_0$ in $Y$. Choose representatives
\[
\phi_\pm:\R\times S^1\to\R\times Y
\]
of $u_\pm$ such that
\begin{equation}
\label{eqn:p}
\lim_{s\to \mp \infty}\pi_Y(\phi_\pm(s,0)) = p.
\end{equation}
To glue, one first translates $\phi_+$ up and $\phi_-$ down, then ``preglues'' them by patching them together using cutoff functions, and finally uses the contraction mapping theorem to perturb the preglued cylinder to a holomorphic cylinder. See e.g.\ \cite[\S5]{ht2}, which carries out a general gluing construction of which the above is a special case\footnote{In \cite{ht2} it is assumed that the holomorphic curves to be glued are not multiply covered, but the construction works the same way for multiply covered cylinders.}.

Every gluing is obtained by taking some pair $(\phi_+,\phi_-)$ satisfying \eqref{eqn:p} and applying the above construction, cf.\ \cite[\S7]{ht2}. Up to $\R$-translation of the domain and target, which does not affect the end of the index two moduli space attained by gluing, there are $d(\gamma_0)/d(u_\pm)$ distinct parametrizations $\phi_\pm$ of $u_\pm$ satisfying \eqref{eqn:p}. This gives $d(\gamma_0)^2/d(u_+)d(u_-)$ pairs $(\phi_+,\phi_-)$ satisfying \eqref{eqn:p}. Observe that the cyclic group $\Z/d(\gamma_0)$ acts on the set of pairs $(\phi_+,\phi_-)$ satisfying \eqref{eqn:p} by rotating the $S^1$ coordinate of $\phi_+$ and $\phi_-$. Two pairs $(\phi_+,\phi_-)$ glue to the same end of the index two moduli space if and only if they are in the same orbit of this $\Z/d(\gamma_0)$ action. Now $j\in\Z/d(\gamma_0)$ fixes the pair $(\phi_+,\phi_-)$ if and only if $j$ is a multiple of $d(\gamma_0)/d(u_+)$ and $d(\gamma_0)/d(u_-)$, i.e.\ if and only if $j$ is a multiple of $d(\gamma_0)/k$. Thus each orbit of the $\Z/d(\gamma_0)$ action has cardinality $d(\gamma_0)/k$, so the number of orbits is
\[
\frac{d(\gamma_0)^2/d(u_+)d(u_-)}{d(\gamma_0)/k} = \frac{kd(\gamma_0)}{d(u_+)d(u_-)}.
\]

(b) Since $\gamma_+$ and $\gamma_-$ are good, we know from Lemma~\ref{lem:at}(c) that on each end of the moduli space of index $2$ cylinders converging to $(u_+,u_-)$, the covering multiplicity $d$ is constant. Now $d(u_+)$ and $d(u_-)$ must both be multiples of $d$. Therefore $d$ is a divisor of $k$. To complete the proof of (b), it is enough to show that $d$ is a multiple of $k$. To do so, note that each cylinder $u_\pm$ is a $k$-fold cover of a cylinder $\widehat{u_\pm}$. It is then enough to show that every end of the moduli space of index $2$ cylinders converging to $(u_+,u_-)$ is obtained by taking $k$-fold covers of an end of the moduli space of index $2$ cylinders converging to $(\widehat{u_+},\widehat{u_-})$.

By Lemma~\ref{lem:cylindercoverindex}(a) we have $\op{ind}(\widehat{u_\pm})=1$, so we can apply part (a) to the pair $(\widehat{u_+},\widehat{u_-})$. This tells us that the number of ends of the moduli space of index $2$ cylinders converging to $(\widehat{u_+},\widehat{u_-})$ is
\[
\frac{d(\gamma_0)/k}{d(\widehat{u_+})d(\widehat{u_-})} = \frac{kd(\gamma_0)}{d(u_+)d(u_-)}.
\]
By part (a) again, this agrees with the number of ends of the moduli space of index $2$ cylinders converging to $(u_+,u_-)$. Thus all of the latter ends are accounted for by $k$-fold covers of ends converging to $(\widehat{u_+},\widehat{u_-})$.
\end{proof}

\subsection{Proof of Theorem~\ref{thm:main}}

Assume that $\lambda$ satisfies the hypotheses of the theorem and that $J$ is generic.

By Lemma~\ref{lem:at}(a), each moduli space $\mc{M}_1^J(\gamma_+,\gamma_-)/\R$ of index $1$ cylinders is a $0$-manifold, which can be oriented by a choice of coherent orientations as in \cite{bm}. By Proposition~\ref{prop:1neg}(b), $\mc{M}_1^J(\gamma_+,\gamma_-)/\R$ is compact, hence finite. Thus the operator $\delta$ in \eqref{eqn:delta} is defined.

To prove that $\delta\kappa\delta=0$, suppose that $\gamma_+$ and $\gamma_-$ are good Reeb orbits. We know from Lemma~\ref{lem:at}(b) that the moduli space $\mc{M}_2^J(\gamma_+,\gamma_-)/\R$ of index $2$ cylinders is an oriented $1$-manifold, and the covering multiplicity $d$ is constant on each component. We claim that $\mc{M}_2^J(\gamma_+,\gamma_-)/\R$ has a compactification to a compact oriented $1$-manifold $\overline{\mc{M}_2^J(\gamma_+,\gamma_-)/\R}$, obtained by attaching one boundary point to each end, such that
\begin{equation}
\label{eqn:sumX}
\sum_{X\in \pi_0\left( \overline{\mc{M}_2^J(\gamma_+,\gamma_-)/\R}\right) } \frac{\#\partial X}{d(X)} = \langle\delta\kappa\delta\gamma_+,\gamma_-\rangle.
\end{equation}
Here $\#\partial X$ denotes the signed count of boundary points of the component $X$, which of course is zero. Thus equation \eqref{eqn:sumX} implies that $\delta\kappa\delta=0$.

To prove \eqref{eqn:sumX}, note that by Proposition~\ref{prop:1neg}(c), each end of $\mc{M}_2^J(\gamma_+,\gamma_-)/\R$ limits to a building as in case (ii) or (iii) of Proposition~\ref{prop:1neg}(c). But in fact case (iii) cannot happen, because then the Reeb orbit $\gamma^{d_2}$ is contractible and has $\CZ(\gamma^{d_2})=3$, so the hypothesis (*) of the theorem implies that $d_2=1$, and Proposition~\ref{prop:nobadbreak} gives a contradiction.
Consequently, each end of $\mc{M}_2^J(\gamma_+,\gamma_-)/\R$ limits to a building $(u_+,u_-)$, where $u_+\in\mc{M}_1^J(\gamma_+,\gamma_0)/\R$ and $u_-\in\mc{M}_1^J(\gamma_0,\gamma_-)/\R$ for some Reeb orbit $\gamma_0$. Given such a pair $(u_+,u_-)$, let $G(u_+,u_-)$ denote its contribution to the left hand side of \eqref{eqn:sumX}. We need to show that
\begin{equation}
\label{eqn:G}
G(u_+,u_-) = \left\{\begin{array}{cl} \frac{\epsilon(u_+)\epsilon(u_-)d(\gamma_0)}{d(u_+)d(u_-)}, & \mbox{if $\gamma_0$ is good},\\
0, & \mbox{if $\gamma_0$ is bad}.
\end{array}\right.
\end{equation}

If $\gamma_0$ is bad, let $\overline{u_\pm}$ denote the somewhere injective curve underlying $u_\pm$. By Lemma~\ref{lem:cylindercoverindex}(b), the negative end of $\overline{u_+}$ and the positive end of $\overline{u_-}$ are both at bad Reeb orbits. In particular, $d(\gamma_0)/d(u_\pm)$ is even. Thus the least common multiple of $d(u_+)$ and $d(u_-)$ divides $d(\gamma_0)/2$. It then follows from Lemma~\ref{lem:gluing}(a) that there are an even number of ends of the moduli space $\mc{M}_2^J(\gamma_+,\gamma_-)$ that converge to the building $(u_+,u_-)$. These ends are related to each other by the $\Z/d(\gamma_0)$ action described in the proof of Lemma~\ref{lem:gluing}(a). By \cite[Thm.\ 3]{bm}, shifting by $1\in\Z/d(\gamma_0)$ switches the sign of the corresponding end, i.e.\ the sign of the corresponding boundary point of the index two moduli space. Thus half of the ends have positive sign and half have negative sign, so $G(u_+,u_-)=0$.

If $\gamma_0$ is good, then by Lemma~\ref{lem:gluing}, the number of ends of the moduli space of index $2$ cylinders converging to $(u_+,u_-)$, divided by their multiplicity, is $d(\gamma_0)/d(u_+)d(u_-)$. By \cite{bm}, each end has sign $\epsilon(u_+)\epsilon(u_-)$. This implies \eqref{eqn:G} and completes the proof of Theorem~\ref{thm:main}.


\begin{thebibliography}{99}

\bibitem{bce} F. Bourgeois, K. Cieliebak and T. Ekholm, {\em A note on Reeb dynamics on the tight 3-sphere\/}, J. Mod. Dyn. {\bf 1} (2007), 597--613.

\bibitem{bee} F. Bourgeois, T. Ekholm, and Y. Eliashberg, {\em Effect of Legendrian surgery\/}, Geom. Topol. {\bf 16} (2012), 301--389.

\bibitem{behwz} F. Bourgeois, Y. Eliashberg, H. Hofer, K. Wysocki, and E. Zehnder, {\em Compactness results in symplectic field theory\/}, Geom. Topol. {\bf 7} (2003), 799-888.

\bibitem{bm} F. Bourgeois and K. Mohnke, {\em Coherent orientations in symplectic field theory\/}, Math. Z. {\bf 248} (2004), 123--146.

\bibitem{bo09} F. Bourgeois and A. Oancea, {\em Symplectic homology, autonomous Hamiltonians, and Morse-Bott moduli spaces\/}, Duke Math. J. {\bf 146} (2009), 71--174.

\bibitem{bo12} F. Bourgeois and A. Oancea, {\em $S^1$-equivariant symplectic homology and linearized contact homology\/}, arXiv:1212.3731.

\bibitem{cghh} V. Colin, P. Ghiggini, K. Honda, and M. Hutchings, {\em Sutures and contact homology I\/}, Geom. Topol. {\bf 15} (2011), 1749--1842.

\bibitem{dragnev} D. Dragnev, {\em Fredholm theory and transversality for noncompact pseudoholomorphic maps in symplectizations\/}, Comm. Pure Appl. Math. {\bf 57} (2004), 726--763.

\bibitem{egh} Y. Eliashberg, A. Givental and H. Hofer, {\em Introduction to symplectic field theory\/}, Geom. Funct. Anal. 2000, Special Volume, Part II, 560--673.

\bibitem{fh} A. Floer and H. Hofer, {\em Coherent orientations for periodic orbit problems in symplectic geometry\/}, Math. Z. {\bf 212} (1993), 13--38.

\bibitem{hwz1} H. Hofer, K. Wysocki and E. Zehnder, {\em Properties of pseudoholomorphic curves in symplectisations. I. Asymptotics\/}, Ann. Inst. H. Poincare Anal. Non Lineaire {\bf 13} (1996), 337--379.

\bibitem{hwz2} H. Hofer, K. Wysocki and E. Zehnder, {\em Properties of pseudo-holomorphic curves in symplectisations. II. Embedding controls and algebraic invariants\/}, Geom. Funct. Anal. {\bf 5} (1995), 270--328.

\bibitem{hwz99} H. Hofer, K. Wysocki, and E. Zehnder, {\em A characterization of the tight 3-sphere. II.\/} Comm. Pure Appl. Math. {\bf 52} (1999), 1139--1177.

\bibitem{hm} U. Hryniewicz and L. Macarini, {\em Local contact homology and applications\/}, arXiv:1202.3122.

\bibitem{pfh2} M. Hutchings, {\em An index inequality for embedded pseudoholomorphic curves in symplectizations\/}, J. Eur. Math. Soc. {\bf 4} (2002), 313--361.

\bibitem{ir} M. Hutchings, {\em The embedded contact homology index revisited\/}, New perspectives and challenges in symplectic field theory, 263--297, CRM Proc. Lecture Notes 49, Amer. Math. Soc., 2009.

\bibitem{bn} M. Hutchings, {\em Lecture notes on embedded contact homology\/}, Contact and Symplectic Topology, Bolya Society Mathematical Studies {\bf 26} (2014), 389--484, Springer.

\bibitem{sequel} M. Hutchings and J. Nelson, {\em Invariance and an integral lift of cylindrical contact homology for dynamically convex contact forms\/}, in preparation.

\bibitem{ht1} M. Hutchings and C. H. Taubes, {\em Gluing pseudoholomorphic curves along branched covered cylinders I\/}, J. Symplectic Geom. {\bf 5} (2007), 43--137.

\bibitem{ht2} M. Hutchings and C. H. Taubes, {\em Gluing pseudoholomorphic curves along branched covered cylinders II\/}, J. Symplectic Geom. {\bf 7} (2009), 29--133.

\bibitem{momin} A. Momin, {\em Contact homology of orbit complements and implied existence\/}, J. Mod. Dyn. {\bf 5} (2011), 409--472.

\bibitem{jo1} J. Nelson, {\em Automatic transversality in contact homology I: Regularity}, arXiv:1407.3993.

\bibitem{s1} R. Siefring, {\em Relative asymptotic behavior of pseudoholomorphic half-cylinders\/}, Pure Appl. Math. {\bf 61} (2008).

\bibitem{s2} R. Siefring, {\em Intersection theory of punctured pseudoholomorphic curves\/}, Geom. Topol. {\bf 15} (2011), 2351--2457.

\bibitem{wendl} C. Wendl, {\em Automatic transversality and orbifolds of punctured holomorphic curves in dimension four\/}, Comment. Math. Helv. {\bf 85} (2010), 347--407.

\end{thebibliography}
\end{document}